\theoremstyle{plain}
\newtheorem{thm}{Theorem}
\newtheorem{lem}[thm]{Lemma}
\newtheorem{prop}[thm]{Proposition}
\theoremstyle{definition}
\newtheorem{defi}[thm]{Definition}
\newtheorem{question}[thm]{Question}
\theoremstyle{remark}
\newtheorem{rmk}[thm]{Remark}
\numberwithin{equation}{section}
\def\Z{{\mathbb Z}}
\def\Q{{\mathbb Q}}
\def\J{\mathcal{J}}
\def\O{\mathcal{O}}
\def\p{\pi}
\def\t{\tau}
\def\D{\Delta}
\def\G{\Gamma}
\def\.{\cdot}
\def\~{\widetilde}
\def\lru{\lceil}
\def\rru{\rceil}
\def\({\left(}
\def\){\right)}
\newcommand{\ru}[1]{\lru{#1}\rru}
\renewcommand{\and}{ \ \ \text{ and } \ \ }
\DeclareMathOperator{\Tr}  {Tr}
\DeclareMathOperator{\Spec} {Spec}
\DeclareMathOperator{\Ex} {Ex}
\DeclareMathOperator{\Supp} {Supp}
\DeclareMathOperator{\Hom} {Hom}
\begin{document}

\title[Comparing multiplier ideals to test ideals]
{Comparing multiplier ideals to test ideals on numerically $\Q$-Gorenstein varieties}

\author{Tommaso de Fernex}
\address{Department of Mathematics, University of Utah,
155 South 1400 East, Salt Lake City, UT 48112-0090, USA}
\email{{\tt defernex@math.utah.edu}}

\author{Roi Docampo}
\address{Instituto de Matem\'{a}tica e Estat\'{\i}stica, Universidade Federal Fluminense,
Rua M\'{a}rio Santos Braga, s/n, 24020-140 Niter\'{o}i, RJ, Brasil}
\email{{\tt roi@mat.uff.br}}

\author{Shunsuke Takagi}
\address{Graduate School of Mathematical Sciences, 
the University of Tokyo, 3-8-1 Komaba, Meguro-ku, Tokyo 153-8914, Japan.}
\email{{\tt stakagi@ms.u-tokyo.ac.jp}}

\author{Kevin Tucker}
\address{Department of Mathematics, Statistics, and Computer Science, 
University of Illinois at Chicago,
851 S. Morgan Street, Chicago, IL 60607-7045, USA}
\email{{\tt kftucker@uic.edu}}

\thanks{2010 {\it  Mathematics Subject Classification.}
Primary: 14F18, 13A35; Secondary: 14B05.}
\thanks{{\it Key words and phrases.}
Multiplier ideal, test ideal.}

\thanks{The first author 
was partially supported by NSF CAREER grant DMS-0847059, 
NSF FRG Grant DMS-1265285, and a Simons Fellowship. 
The second author was partially supported by
CNPq/Ci\^encia Sem Fronteiras, under the program Atra\c{c}\^{a}o de
Jovens Talentos, Processo 370329/2013--9.
The third author was partially supported by Grant-in-Aid for Young
Scientists (B) 23740024 from JSPS.  The fourth author was partially
supported by NSF grant DMS-1419448.}

\thanks{Compiled on \today. Filename {\tt \jobname}}

\begin{abstract}
We show that the reduction to positive characteristic of the multiplier
ideal in the sense of de~Fernex and Hacon agrees with the test ideal for
infinitely many primes, assuming that the variety is numerically
$\Q$-Gorenstein. It follows, in particular, that this reduction property holds 
in dimension 2 for all normal surfaces. 
\end{abstract}

\maketitle

%\tableofcontents

\section*{Introduction}

It has been understood for a while now that there is a connection between
the theory of multiplier ideals in characteristic zero and the theory of
test ideals in positive characteristic (cf.\ \cite{Smi00,Har01,Tak04,MS11,BST}). 
But one of the puzzling features of
this connection is its asymmetry: test ideals are defined for arbitrary
varieties, but multiplier ideals are only defined after imposing some
conditions on the singularities (namely one requires the variety to be
(log) $\Q$-Gorenstein).
It is therefore natural to wonder if there are extensions of the theory of
multiplier ideals beyond the $\Q$-Gorenstein case, and what could be their
relation to test ideals. One such possible generalization has been proposed
in~\cite{dFH09}, where only normality is assumed to define the multiplier
ideal. This theory is still largely unexplored, and few tools are available
for its study. 

In~\cite{BdFF12,BdFFU}, the authors introduce and study a new
class of singularities, called \emph{numerically
$\Q$-Gorenstein} singularities, which encompasses the class of $\Q$-Gorenstein singularities.
It turns out that the theory of multiplier ideals on normal varieties, as introduced in \cite{dFH09}, 
becomes particularly simple on numerically $\Q$-Gorenstein varieties. 
One key feature is that while on arbitrary normal varieties the definition 
of multiplier ideals requires an asymptotic construction which involves infinitely many 
resolutions, multiplier ideals
on numerically $\Q$-Gorenstein varieties can always be computed from
one resolution of singularities, just like in the $\Q$-Gorenstein case. 

The purpose of this note is the following
theorem, where multiplier ideals in the sense of~\cite{dFH09} are related
to test ideals assuming the variety is numerically $\Q$-Gorenstein.

\begin{thm}
\label{t:main}
Let $X$ be a numerically $\Q$-Gorenstein variety over an uncountable algebraically
closed field $k$ of characteristic~0, and let $Z$ be an effective
$\Q$-Cartier $\Q$-divisor on $X$. Given a model of $(X, Z)$ over a finitely
generated $\Z$-subalgebra $A$ of $k$, 
there exists a dense open subset $S \subseteq \Spec A$ such that for all
closed points $\mu \in S$,  
\[
\J(X,Z)_{\mu} = \t(X_{\mu}, Z_{\mu}).
\]
Here $\J(X,Z)$ denotes the multiplier ideal of the pair $(X,Z)$ in the
sense of~\cite{dFH09}, and $\t(X_{\mu}, Z_{\mu})$ is the $($big$)$ test ideal
of the pair $(X_{\mu}, Z_{\mu})$.
\end{thm}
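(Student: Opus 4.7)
The plan is to exploit the key structural result of~\cite{BdFFU}---that on a numerically $\Q$-Gorenstein variety the multiplier ideal in the sense of~\cite{dFH09} can be computed from a single log resolution---in order to reduce the comparison to the already-known $\Q$-Gorenstein result of Blickle, Schwede, and Tucker. The first step is to use~\cite{BdFFU} to produce an effective $\Q$-Cartier divisor $\Delta$ on $X$ such that $K_X+\Delta$ is $\Q$-Cartier and $\J(X,Z)$ coincides with the ordinary log multiplier ideal $\J((X,\Delta);Z)$ of the resulting $\Q$-Gorenstein log pair. This is precisely where the numerically $\Q$-Gorenstein hypothesis enters: by the single-resolution description, it suffices to take any $\Delta$ whose pullback to a common log resolution $f\colon Y\to X$ of $(X,Z,\Delta)$ compensates for the failure of $K_X$ to be $\Q$-Cartier relative to the numerical pullback.

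After choosing such a $\Delta$ and spreading it out over $A$ along with $X$, $Z$, and $f$, one applies the Blickle--Schwede--Tucker theorem to the $\Q$-Gorenstein pair $((X,\Delta);Z)$. This yields a dense open subset $S\subseteq\Spec A$ such that, for every closed point $\mu\in S$,
\[
\J\bigl((X,\Delta);Z\bigr)_\mu \;=\; \t\bigl((X_\mu,\Delta_\mu);Z_\mu\bigr).
\]
Combined with the first step one has $\J(X,Z)_\mu=\t((X_\mu,\Delta_\mu);Z_\mu)$ on $S$, so the theorem reduces to the identity
\[
\t\bigl((X_\mu,\Delta_\mu);Z_\mu\bigr) \;=\; \t(X_\mu,Z_\mu)
\]
between the test ideal of the log pair and the big test ideal of the underlying normal variety.

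By definition $\t(X_\mu,Z_\mu)$ is the sum of $\t((X_\mu,\Delta');Z_\mu)$ over all effective $\Q$-divisors $\Delta'$ making $K_{X_\mu}+\Delta'$ $\Q$-Cartier with index prime to $p$, so $\t((X_\mu,\Delta_\mu);Z_\mu)\subseteq\t(X_\mu,Z_\mu)$ is automatic. For the reverse inclusion one must show that $\Delta_\mu$ is \emph{dominant} in this family: every $\t((X_\mu,\Delta');Z_\mu)$ is contained in $\t((X_\mu,\Delta_\mu);Z_\mu)$. Pulling back to $Y_\mu$, the divisors $K_{X_\mu}+\Delta_\mu$ and $K_{X_\mu}+\Delta'$ differ by an exceptional, numerically trivial $\Q$-Cartier divisor, and this should translate into the desired containment of Frobenius-trace images, provided a Grauert--Riemenschneider-type vanishing holds on $Y_\mu$; that vanishing forces the removal of a further closed subset of $\Spec A$ corresponding to small primes.

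The main obstacle is precisely this last step: establishing that in characteristic $p\gg0$ a single boundary $\Delta$ suffices to compute the big test ideal on $X_\mu$. The analogous statement for multiplier ideals is the purely birational content of~\cite{BdFFU}, but in positive characteristic the big test ideal is built from Frobenius-trace maps on dualizing sheaves of various $p^{-e}$-linear thickenings, and one has to verify that numerical equivalence between different log structures produces equality---not merely inclusion---of the corresponding trace images. An alternative route would bypass the reduction to the $\Q$-Gorenstein case entirely and match $\J(X,Z)_\mu$ with $\t(X_\mu,Z_\mu)$ directly on $Y_\mu$ via the trace of Frobenius along $f$; in either approach, the crucial technical input is a Kodaira-type vanishing on the resolution that fails for small primes, which is what ultimately shrinks $S$ to a dense open subset of $\Spec A$.
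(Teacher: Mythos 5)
Your overall decomposition is the same as the paper's: use Theorem~1.1 of~\cite{dFH09} (which you attribute to~\cite{BdFFU}, but it is really the de~Fernex--Hacon theorem) to write $\J(X,Z)=\J((X,\Delta),Z)$ for a boundary $\Delta$ with $K_X+\Delta$ $\Q$-Cartier; spread out and apply Takagi's comparison theorem (which you call Blickle--Schwede--Tucker) to get $\J((X,\Delta),Z)_\mu=\t(X_\mu,\Delta_\mu+Z_\mu)$; get the inclusion $\J(X,Z)_\mu\subseteq\t(X_\mu,Z_\mu)$ for free from Schwede's description of the non-$\Q$-Gorenstein test ideal as a sum; and then face the hard reverse inclusion. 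You correctly identify that reverse inclusion as the genuine content of the theorem, and you honestly admit you cannot close it---so the issue is a real gap, and moreover the tools you point toward are not the right ones.

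The gap is in your treatment of the reverse inclusion $\t(X_\mu,\Gamma+Z_\mu)\subseteq\J(X,Z)_\mu$ for an \emph{arbitrary} effective $\Q$-divisor $\Gamma$ on $X_\mu$ with $K_{X_\mu}+\Gamma$ $\Q$-Cartier. Your assertion that $\pi_\mu^*(K_{X_\mu}+\Delta_\mu)$ and $\pi_\mu^*(K_{X_\mu}+\Gamma)$ differ by an exceptional numerically trivial divisor is false: the pushforward of their difference is $\Delta_\mu-\Gamma\ne 0$, so the difference is not exceptional. To produce an exceptional divisor one must add the strict transform of $\Gamma$, which is exactly what the paper does in Proposition~\ref{p:ineq} by considering $F_\mu = K_{Y_\mu}-\pi_\mu^*(K_{X_\mu}+\Gamma)-(K_{Y/X}^{\rm num})_\mu+(\pi_\mu)_*^{-1}\Gamma$. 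More importantly, the paper's proof of the required inequality uses no vanishing theorem whatsoever. It is a purely intersection-theoretic argument: a refined Negativity Lemma (Lemma~\ref{l:NL}) producing \emph{finitely many} $\pi$-exceptional curves $C_j$ on $Y$, each movable in codimension~1, that suffice to detect anti-effectivity of any exceptional divisor. The crucial point is that $F_\mu$ is exceptional, hence lifts to a characteristic-zero divisor $F$ supported on the $E_i$; one then checks $F\cdot C_j\ge 0$ by transferring the computation back to the reductions $C_{j,\mu}$, where $\pi_\mu^*(K_{X_\mu}+\Gamma)\cdot C_{j,\mu}=0$ (since it is a genuine pullback of a $\Q$-Cartier divisor) and $(\pi_\mu)_*^{-1}\Gamma\cdot C_{j,\mu}\ge 0$ (movability). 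This lets the argument accommodate $\Gamma$ that has no lift to characteristic zero. Your ``Kodaira-type vanishing that fails for small primes'' plays no role; the restriction to a dense open $S$ comes only from spreading out the curves $C_j$, the exceptional divisors $E_i$, and the intersection numbers, plus the hypotheses of Takagi's theorem. Finally, your alternative route via the trace of Frobenius along $\pi_\mu$ is precisely the approach the paper discusses and rejects in Section~\ref{sec:some-open-questions}: it would require $X_\mu$ to remain numerically $\Q$-Gorenstein after reduction, which is not known, and the entire Lemma~\ref{l:NL} machinery exists exactly to sidestep that difficulty.
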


Since every normal surface is
numerically $\Q$-Gorenstein (cf.\ \cite[Example~2.29]{BdFF12}), 
the theorem applies, in particular, 
to all pairs $(X,Z)$ where $X$ is any normal surface and $Z$ is an effective
$\Q$-Cartier $\Q$-divisor on $X$.

Theorem~\ref{t:main} is an extension of the 
results on (log) $\Q$-Gorenstein varieties \cite{Smi00,Har01,Tak04}.
It remains open, at the moment, whether the same property holds on all normal varieties
(this question is considered for instance in \cite[Remark~6.2]{Sch11}).
We hope that our theorem, apart from bringing evidence to this,
will also show that numerically $\Q$-Gorenstein varieties
provide a natural framework for the theory of singularities of pairs. 

The paper is organized as follows. In section~\ref{s:multiplier-ideals}
we review the theories of multiplier ideals in the sense of~\cite{dFH09}
and of numerically $\Q$-Gorenstein singularities. In
section~\ref{s:test-ideals} we discuss the theory of test ideals and the
previously known relations to multiplier ideals. In
section~\ref{s:proof}, we give the proof Theorem~\ref{t:main}.
In the last section, we briefly discuss the numerically
$\Q$-Gorenstein condition in positive characteristics and ask some natural
open questions.

\subsection*{Acknowledgments} 
This paper is the result of a project started during the AIM workshop
``Relating test ideals and multiplier ideals,'' Palo Alto, August 8--12, 2011.

\section{Multiplier ideals}
\label{s:multiplier-ideals}

Consider a pair $(X, Z)$, where $X$ is a normal variety, defined over an
algebraically closed field $k$ of characteristic~0, and $Z$ is an
effective $\Q$-Cartier $\Q$-divisor on $X$. The normality of $X$ guarantees
the existence of canonical divisors $K_X$ on $X$.

The classical way of associating a multiplier ideal to $(X,Z)$ assumes that
$K_X$ is $\Q$-Cartier (the $\Q$-Gorenstein condition) or, more generally,
that we fix an effective $\Q$-divisor $\D$ such that $K_X + \D$ is
$\Q$-Cartier. Then we consider a log resolution $\pi\colon Y \to X$ of
$((X,\D),Z)$ and pick canonical divisors $K_X$ on $X$ and $K_Y$ on $Y$ in a
compatible way: $\pi_*K_Y = K_X$.

\begin{defi}
The sheaf of ideals
\[
\J((X,\D),Z) := \p_*\O_Y(\ru{K_Y - \p^*(K_X + \D + Z)})
\]
is called the \emph{multiplier ideal} of $((X,\D),Z)$. When $X$ is
$\Q$-Gorenstein we write $\J(X,Z)$ instead of $\J((X,0),Z)$.
\end{defi}

In~\cite{dFH09} a multiplier ideal is defined without using an auxiliary
boundary $\D$. For this, consider log resolutions $\pi_m \colon Y_m \to X$
of $(X,Z)$ with the following extra conditions: the inverse image
of the fractional ideal $\O_X(-mK_X)$ is of the form
\[ 
\O_X(-mK_X) \. \O_{Y_m}
=
\O_Y(-f^\sharp(mK_X)),
\]
for some Cartier divisor $f^\sharp(mK_X)$ on $Y_m$ with simple normal
crossings with $\Ex(\pi_m) \cup \Supp(\pi_m^*Z)$. After picking canonical
divisors $K_{Y_m}$ and $K_X$ with $\pi_*K_{Y_m} = K_X$, we define
\[
K_{m, Y_m/X} := K_{Y_m} - \tfrac 1m f^\sharp(mK_X).
\]

\begin{defi}
The sheaf of ideals
\[
\J_m(X,Z) := 
{\p_m}_*\O_{Y_m}(\ru{K_{m, Y_m/X} - \p^*Z})
\]
is called the \emph{$m$-limiting multiplier ideal} of $(X,Z)$. The
\emph{multiplier ideal} $\J(X,Z)$ of the pair $(X,Z)$ is the maximal
element of the set of $m$-limiting multiplier ideals:
\[
\J(X,Z) := \J_{m_0}(X,Z) = \sum_{m=1}^\infty \J_m(X,Z),
\]
where $m_0$ is sufficiently divisible (the condition on divisibility depends both on $X$ and $Z$). 
\end{defi}

If $K_X$ is $\Q$-Cartier, then the sequence of $m$-limiting multiplier
ideals stabilizes to the classical $\J(X,Z)$ as soon as $m$ is divisible by
the Cartier index of $K_X$, and in particular both definitions of $\J(X,Z)$
agree in this case. The following result relates the two notions in
general. 

\begin{thm}[\protect{\cite[Theorem~1.1]{dFH09}}]
\label{t:dFH}
The multiplier ideal $\J(X,Z)$ is the unique maximal element of the set of multiplier ideals
$\J((X,\D),Z)$ where $\D$ varies among all effective $\Q$-divisors such that
$K_X + \D$ is $\Q$-Cartier. In particular, there is one such $\D_0$ such that 
\[
\J(X,Z) = \J((X,\D_0),Z)
= \sum_\D
\J((X,\D),Z).
\]
\end{thm}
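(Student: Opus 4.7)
The plan is to prove two inclusions, namely $\J((X, \D), Z) \subseteq \J(X, Z)$ for every admissible $\D$, and $\J(X, Z) \subseteq \J((X, \D_0), Z)$ for a specific $\D_0$ that I construct; together with the trivial $\J((X, \D_0), Z) \subseteq \sum_\D \J((X, \D), Z)$, these give all the identifications in the statement.

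For the first inclusion, fix admissible $\D$, let $m$ be divisible both by the Cartier index of $K_X + \D$ and by whatever divisibility is required for $\J_m(X, Z) = \J(X, Z)$, and take a common log resolution $\pi \colon Y \to X$ of $(X, Z)$, $((X, \D), Z)$, and the fractional ideal $\O_X(-mK_X)$. Effectivity of $\D$ gives $\O_X(-m(K_X + \D)) \subseteq \O_X(-mK_X)$ as fractional ideals, and since $m(K_X + \D)$ is Cartier this pulls back to $m\pi^*(K_X + \D) \geq f^\sharp(mK_X)$ on $Y$. Subtracting $\pi^*Z$ from $K_Y$, rounding up, and pushing forward yields $\J((X, \D), Z) \subseteq \J_m(X, Z) = \J(X, Z)$.

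For the second inclusion, fix $m$ sufficiently divisible so $\J_m(X, Z) = \J(X, Z)$, let $\pi = \pi_m$ be the associated log resolution, and pick a Cartier divisor $H$ on $X$ sufficiently ample that $\O_X(-mK_X + H)$ is globally generated. By the definition of $f^\sharp(mK_X)$, the natural map $\pi^*\O_X(-mK_X + H) \surj \O_Y(-f^\sharp(mK_X) + \pi^*H)$ is surjective, so the target invertible sheaf is globally generated by pullbacks of global sections of the source. By a Bertini argument, a general section $s \in H^0(X, \O_X(-mK_X + H))$ produces an effective Weil divisor $D_s \in |H - mK_X|$ whose strict transform $\pi^{-1}_*D_s$ is smooth and transverse to every component of $\Ex(\pi) \cup \Supp(\pi^*Z)$, and whose image $\bar s$ in $\O_Y(-f^\sharp(mK_X) + \pi^*H)$ has zero divisor containing no exceptional component and hence equal to $\pi^{-1}_*D_s$. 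Setting $\D_0 := \tfrac{1}{m}D_s$, the divisor $m(K_X + \D_0) \sim H$ is Cartier, $\pi$ is a log resolution of $((X, \D_0), Z)$, and a local computation using trivializations of $\O_X(H)$ yields $\pi^*(m(K_X + \D_0)) = f^\sharp(mK_X) + \pi^{-1}_*D_s$.

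Substituting, $K_Y - \pi^*(K_X + \D_0 + Z) = (K_{m, Y/X} - \pi^*Z) - \tfrac{1}{m}\pi^{-1}_*D_s$, and the key observation is that the extra term does not affect the round-up: by the transversality above, no prime component of $\pi^{-1}_*D_s$ appears in the support of $K_{m, Y/X} - \pi^*Z$, so along each such component the coefficient goes from $0$ to $-1/m$, which still rounds up to $0$. Hence the two round-ups coincide, and pushing forward gives $\J((X, \D_0), Z) = \J_m(X, Z) = \J(X, Z)$, establishing the second inclusion. The principal technical obstacle is producing $s$ with the required zero-divisor property: one uses that $\O_Y(-f^\sharp(mK_X) + \pi^*H)$ is globally generated by the pullback sections, so no exceptional prime divisor lies in their common base locus, and therefore a general $s$ has $\bar s$ nonvanishing at the generic point of each exceptional divisor, as desired.
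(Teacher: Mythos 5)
Your proof is correct and follows essentially the same route as the cited source~\cite{dFH09} (the present paper only quotes the result without reproving it). The central construction in your second inclusion --- a boundary $\D_0 = \tfrac 1m D_s$ with $D_s$ a general member of $|{-mK_X+H}|$ chosen so that $\pi^*\bigl(m(K_X+\D_0)\bigr) = f^\sharp(mK_X) + \pi^{-1}_*D_s$ and $\pi^{-1}_*D_s$ meets $\Exc(\pi)\cup\Supp(\pi^*Z)\cup\Supp(f^\sharp(mK_X))$ transversally --- is precisely what de~Fernex and Hacon isolate as an \emph{$m$-compatible boundary}, and they prove both inclusions by the same round-up comparison you give. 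Two small points worth making explicit: you implicitly need $m\ge 2$ so that $\lceil -\tfrac 1m \pi^{-1}_*D_s\rceil = 0$ (harmless, since $m$ can be replaced by any multiple), and the Bertini step tacitly assumes $X$ is quasi-projective so that a suitable ample $H$ exists; since multiplier ideals are local on $X$, one may reduce to this case at the outset, and both the paper you are reproving and the original source do so.
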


In general it is not clear how to detect when the sequence of $m$-limiting
multiplier ideals stabilizes. A priori, one might need to take into account
infinitely many resolutions to determine that we have computed $\J(X,Z)$.
The condition of \emph{numerically $\Q$-Gorenstein} singularities,
introduced and studied in~\cite{BdFF12,BdFFU}, is designed to provide a
work-around for this issue.\footnote{The definitions of \emph{numerically $\Q$-Cartier}
and \emph{numerically $\Q$-Gorenstein} given here follow \cite{BdFFU};
they are equivalent to the definitions of \emph{numerically Cartier}
and \emph{numerically Gorenstein} given in \cite{BdFF12}.}

\begin{defi}
\label{d:num-Gor}
A (Weil) divisor $D$ on a normal variety $X$ is said to be
\emph{numerically $\Q$-Cartier} if for some (equivalently any) resolution
of singularities $\pi \colon Y \to X$, there is a ($\Q$-Cartier)
$\Q$-divisor $D'$ on $Y$ that is $\pi$-numerically trivial (that is, it has zero
intersection with all $\pi$-exceptional curves on $Y$) and satisfies $D = \pi_* D'$.
When $D$ is numerically $\Q$-Cartier, the $\Q$-divisor $D'$ as above is
uniquely determined for each resolution $Y$. We denote it $\pi^*_{\rm
num}D$ and call it the \emph{numerical pull-back} of $D$ to $Y$.
\end{defi}

\begin{defi}
When $K_X$ is numerically $\Q$-Cartier, then we say that $X$ is
\emph{numerically $\Q$-Gorenstein}. In this case, given a resolution
$\pi\colon Y \to X$, and canonical divisors $K_Y$ and $K_X$ with $\pi_*K_Y
= K_X$, we define
\[
K_{Y/X}^{\rm num} := K_Y - \p^*_{\rm num}K_X.
\]
\end{defi}

In the case of surfaces, the notion of numerical pull-back was introduced
by Mumford. In fact, it follows from the Negativity Lemma that all divisors
on normal surfaces are numerically $\Q$-Cartier, and in particular all
normal surfaces are numerically $\Q$-Gorenstein.

Another useful example to keep in mind in order 
to compare this notion with the more common notion of $\Q$-Gorenstein singularities
is the case of cones. Suppose $X$ is the cone over a projective manifold $V$ polarized by a
projectively normal divisor $L$ (so that $X$ is normal). Then $X$ is
$\Q$-Gorenstein (resp., numerically $\Q$-Gorenstein) if and only if
$L$ is $\Q$-linearly proportional (resp., numerically proportional) to $K_V$
(cf.\ \cite[Lemma~2.32]{BdFF12}). 

The main reason the notion of numerical $\Q$-Gorenstein singularities is
useful in the context of multiplier ideals is the following result.

\begin{thm}[\protect{\cite[Theorem~1.3]{BdFFU}}]
\label{t:BdFFU}
Assume that $X$ is numerically $\Q$-Gorenstein. For every log resolution
$\p \colon Y \to X$ of $(X,Z)$, we have
\[
\J(X,Z) = \p_*\O_Y(\ru{K_{Y/X}^{\rm num} - \p^*Z}).
\]
\end{thm}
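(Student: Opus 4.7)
The plan is to derive the single-resolution formula from the $m$-limiting definition $\J(X,Z) = \sum_{m} \J_m(X,Z)$ by comparing, on a fixed log resolution $\pi \colon Y \to X$ of $(X,Z)$, the $\Q$-divisor $\tfrac{1}{m} f^\sharp(mK_X)$ that enters $\J_m(X,Z)$ with the numerical pull-back $\pi^*_{\rm num} K_X$ that enters the asserted right-hand side. Passing to a higher log resolution if necessary, I may arrange that a single $Y$ computes $\J_m(X,Z)$ for every $m$ in a cofinal sequence of sufficiently divisible integers, and that all divisors in sight share a common snc support; independence of the right-hand side from the choice of log resolution is the standard computation of pushforwards under a further blowup, using that $\pi'_*\O_{Y'}(K_{Y'/Y}) = \O_Y$.

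The crux is the divisor comparison
\[
F_m := \tfrac{1}{m} f^\sharp(mK_X) - \pi^*_{\rm num} K_X \ge 0, \qquad F_m \longrightarrow 0 \text{ coefficient-wise along divisible } m.
\]
Both summands push forward to $K_X$, so $F_m$ is $\pi$-exceptional. Since $\O_Y(-f^\sharp(mK_X))$ is the image of the surjection $\pi^*\O_X(-mK_X) \twoheadrightarrow \O_Y(-f^\sharp(mK_X))$, the Cartier divisor $-f^\sharp(mK_X)$ is $\pi$-globally generated, hence $\pi$-nef; combined with the $\pi$-numerical triviality of $\pi^*_{\rm num} K_X$ this makes $-F_m$ $\pi$-nef, so the Negativity Lemma applied to the $\pi$-exceptional divisor $F_m$ produces $F_m \ge 0$. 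For the convergence I would use the subadditivity $f^\sharp((m+m')K_X) \le f^\sharp(mK_X) + f^\sharp(m'K_X)$, coming from the inclusion $\O_X(-mK_X)\cdot \O_X(-m'K_X) \subseteq \O_X(-(m+m')K_X)$: along divisibility the family $\tfrac{1}{m} f^\sharp(mK_X)$ is decreasing and bounded below by $\pi^*_{\rm num} K_X$, so it converges coefficient-wise to a $\pi$-antinef $\pi$-exceptional limit $D$ with $\pi_*D = K_X$; the numerical $\Q$-Gorenstein hypothesis, via the characterization of $\pi^*_{\rm num} K_X$ as the relevant $\pi$-nef envelope in the sense of \cite{BdFF12}, forces $D = \pi^*_{\rm num} K_X$.

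Granting this, the two inclusions fall out easily. For $\subseteq$, the inequality $F_m \ge 0$ gives $K_Y - \tfrac{1}{m} f^\sharp(mK_X) - \pi^* Z \le K_{Y/X}^{\rm num} - \pi^* Z$, hence $\J_m(X,Z) \subseteq \pi_*\O_Y(\ru{K_{Y/X}^{\rm num} - \pi^* Z})$ for every $m$, and summing finishes this direction. For $\supseteq$ I work component-by-component on the finitely many prime divisors $E$ in the common snc support: on each such $E$ the coefficient $a$ of $K_Y - \pi^*_{\rm num} K_X - \pi^* Z$ is a fixed rational number, while the corresponding coefficient of $K_Y - \tfrac{1}{m} f^\sharp(mK_X) - \pi^* Z$ equals $a - (F_m)_E$, which is at most $a$ and tends to $a$. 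A short case distinction ($a \in \Z$ versus $a \notin \Z$) shows that once $(F_m)_E$ is small enough these two rationals have the same round-up; taking $m$ sufficiently divisible to handle all components simultaneously yields $\J_m(X,Z) = \pi_*\O_Y(\ru{K_{Y/X}^{\rm num} - \pi^* Z})$, which is the reverse inclusion.

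The main obstacle is the divisor comparison above. Effectivity $F_m \ge 0$ is a clean application of the Negativity Lemma once one knows that $\pi^*_{\rm num} K_X$ is $\pi$-numerically trivial, which is guaranteed by the numerical $\Q$-Gorenstein hypothesis. The harder input is the convergence $F_m \to 0$: it requires identifying $\pi^*_{\rm num} K_X$ with the infimum of the approximating family and not merely a lower bound for it, and this is where the hypothesis really pays off, as in general (without it) the infimum can strictly exceed any putative numerical pull-back and the $m$-limiting definition does not collapse to a single resolution.
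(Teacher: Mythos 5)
First, note that the paper does not prove Theorem~\ref{t:BdFFU} at all: it is imported verbatim from \cite{BdFFU}*{Theorem~1.3}, so your attempt has to be judged as a reproof of that cited result, and as such it has two genuine gaps. The first is the opening reduction: you cannot ``arrange that a single $Y$ computes $\J_m(X,Z)$ for every $m$ in a cofinal sequence of sufficiently divisible integers.'' That would require $\O_X(-mK_X)\cdot \O_Y$ to be invertible (with the required snc condition) on one fixed $Y$ for infinitely many $m$, which in general would force something like finite generation of $\bigoplus_m \O_X(-mK_X)$ and is not available; this is exactly why the definition in \cite{dFH09} is asymptotic and uses infinitely many resolutions. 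The salvageable half of your argument is the inclusion $\J(X,Z)\subseteq \p_*\O_Y(\ru{K_{Y/X}^{\rm num}-\p^*Z})$: working on each model $Y_m$ dominating $Y$, your Negativity Lemma argument correctly gives $\tfrac1m f^\sharp(mK_X)\ge \pi_m^{*{\rm num}}K_X$, and compatibility of numerical pull-backs under further blow-ups plus the standard pushforward lemma gives this inclusion for every $m$. But your reverse inclusion is argued by stabilization of round-ups on the finitely many exceptional components of the fixed $Y$, whereas for the relevant $m$ the ideal $\J_m(X,Z)$ is computed on $Y_m$, whose exceptional divisors proliferate with $m$; you would need the round-up comparison uniformly on all divisors of the higher models, not just those living on $Y$, and nothing in the proposal supplies that uniformity.

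The second, and more fundamental, gap is the convergence claim $F_m\to 0$, i.e.\ the identification of $\lim_m \tfrac1m f^\sharp(mK_X)$ with $\pi^*_{\rm num}K_X$. The Negativity Lemma only yields the one-sided bound $F_m\ge 0$: the limit $D$ of the decreasing family satisfies that $-D$ is relatively nef, which is the wrong sign to apply negativity a second time, so nothing forces $D\le \pi^*_{\rm num}K_X$. Your way out is to invoke ``the characterization of $\pi^*_{\rm num}K_X$ as the relevant $\p$-nef envelope in the sense of \cite{BdFF12},'' but with the definition of numerically $\Q$-Cartier actually used in this paper (Definition~\ref{d:num-Gor}: existence of a relatively numerically trivial pull-back on a resolution, following \cite{BdFFU}), the statement that the nef envelope of $-K_X$ has trace $-\pi^*_{\rm num}K_X$ on every resolution is itself the nontrivial equivalence established in \cite{BdFFU}, i.e.\ essentially the content of the theorem being proved. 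So at its crux the proposal cites (a reformulation of) the result it sets out to prove, rather than proving it; an honest argument would have to establish that identification of the envelope with the numerical pull-back, together with enough uniformity over the tower of models to control the round-ups, which is precisely what \cite{BdFFU} supplies.
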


\section{Test ideals}
\label{s:test-ideals}

A field $k$ of characteristic $p>0$ is said to be \textit{$F$-finite} if $[k:k^p]<\infty$. 
Let $V$ be a normal variety defined over an $F$-finite field of
characteristic $p > 0$, and let $W$ be an effective $\Q$-divisor on $V$. 
Let $F^e : V \to V$ denote the $e^{\rm th}$ iteration of the
absolute Frobenius morphism.

\begin{defi}
We define the \textit{test ideal}\footnote{It has been traditionally called
in the literature the \emph{big} (or \emph{non-finitistic}) \emph{test
ideal}. } $\tau(V, W)$ of the pair $(V, W)$ to be the smallest nonzero
ideal sheaf $J \subseteq \O_V$ which locally satisfies the following condition: 
for every $e \ge 0$ and every map $\phi \in
\mathrm{Hom}_{\O_V}(F^e_* \O_V(\lceil (p^e - 1)W \rceil), \O_V)$,
we have
    \[
    \phi\big(F^e_*J \big) \subseteq J.
    \]
\end{defi}

\begin{thm}[\protect{\cite[Corollary~5.2]{Sch11}}]
\label{t:Sch}
Suppose that $W$ is an effective $\Q$-Cartier $\Q$-divisor on $V$. 
Then we have 
\[
\t(V,W) = \sum_\G \t(V,\G+W)
\]
where the sum runs over all effective $\Q$-divisors $\G$ on $V$ such that
$K_V + \G$ is $\Q$-Cartier. 
\end{thm}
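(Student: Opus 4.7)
The plan is to prove the two containments separately, working locally on $V$ since test ideals commute with localization.

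For $\sum_\G \t(V, W+\G) \subseteq \t(V, W)$, I need only verify the monotonicity $\t(V, W+\G) \subseteq \t(V, W)$ for every effective $\Q$-divisor $\G$ (the hypothesis that $K_V + \G$ be $\Q$-Cartier is not used here). Since $\G \geq 0$ gives $\O_V(\ru{(p^e-1)W}) \subseteq \O_V(\ru{(p^e-1)(W+\G)})$, any $\psi \in \Hom_{\O_V}(F^e_*\O_V(\ru{(p^e-1)(W+\G)}), \O_V)$ restricts via this inclusion to a $\phi \in \Hom_{\O_V}(F^e_*\O_V(\ru{(p^e-1)W}), \O_V)$. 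Since $\t(V,W) \subseteq \O_V$ sits inside both sheaves and $\psi$ agrees with $\phi$ on $F^e_* \t(V,W)$, the ideal $\t(V,W)$ is $\psi$-compatible, and the minimality defining $\t(V, W+\G)$ yields the inclusion.

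For the reverse containment, I would use the Grothendieck duality isomorphism
\[
\Hom_{\O_V}(F^e_*\O_V(D), \O_V) \cong F^e_*\O_V((1-p^e)K_V - D),
\]
valid on the normal variety $V$. Locally, a given $\phi$ with $D = \ru{(p^e-1)W}$ corresponds (up to a unit) to some $f$ in the function field of $V$ with $E_\phi := \divisor(f) + (1-p^e)K_V - \ru{(p^e-1)W} \geq 0$. Set
\[
\G_\phi := \frac{1}{p^e-1}\bigl(E_\phi + \ru{(p^e-1)W} - (p^e-1)W\bigr),
\]
an effective $\Q$-divisor. A direct calculation shows $(p^e-1)(K_V + W + \G_\phi) = \divisor(f) \sim 0$, hence $K_V + W + \G_\phi$ is $\Q$-Cartier; since $W$ is $\Q$-Cartier by hypothesis, so is $K_V + \G_\phi$. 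Moreover $(p^e-1)(W + \G_\phi) = E_\phi + \ru{(p^e-1)W}$ is integral, so $\ru{(p^e-1)(W+\G_\phi)} = (p^e-1)(W+\G_\phi)$, and the same $f$ produces a map $\tilde\phi \in \Hom_{\O_V}(F^e_*\O_V(\ru{(p^e-1)(W+\G_\phi)}), \O_V)$ whose restriction to the subsheaf $F^e_*\O_V(\ru{(p^e-1)W})$ is $\phi$.

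To conclude, I invoke the standard test-element presentation of the big test ideal: there is a nonzero $c \in \O_V$ (taken to work simultaneously for all the pairs $(V, W+\G)$ in play, after possibly shrinking $V$) such that
\[
\t(V, W) = \sum_{e \geq 0}\sum_\phi \phi\bigl(F^e_*\bigl(c \cdot \O_V(\ru{(p^e-1)W})\bigr)\bigr),
\]
with $\phi$ ranging over $\Hom_{\O_V}(F^e_*\O_V(\ru{(p^e-1)W}), \O_V)$, and the analogous formula for each $\t(V, W+\G)$. For each summand, $\phi(F^e_*(c \cdot \O_V(\ru{(p^e-1)W})))$ equals $\tilde\phi$ of the same input, and enlarging the input gives $\tilde\phi(F^e_*(c \cdot \O_V(\ru{(p^e-1)(W+\G_\phi)})))$, which appears in the formula for $\t(V, W+\G_\phi)$ and thus lies in $\t(V, W+\G_\phi) \subseteq \sum_\G \t(V, W+\G)$. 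The main obstacle is the duality bookkeeping for $\G_\phi$: verifying that $K_V + \G_\phi$ is $\Q$-Cartier and that $(p^e-1)(W+\G_\phi)$ is integral. Both are precisely where the hypothesis that $W$ is $\Q$-Cartier enters, so without it one could not relate an arbitrary $\phi$ on the $(V, W)$ side to a map associated with some pair $(V, W+\G)$ having $K_V + \G$ $\Q$-Cartier.
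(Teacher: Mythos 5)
You are proving a statement that the paper itself does not prove but quotes from \cite[Corollary~5.2]{Sch11}, so I can only compare your argument with what can actually be made to work. Your monotonicity argument for $\sum_\G \t(V,W+\G)\subseteq\t(V,W)$ is correct, and your use of duality to attach to each $\phi$ an effective $\G_\phi$ with $(p^e-1)(K_V+W+\G_\phi)=\divisor(f)$, so that $K_V+\G_\phi$ is $\Q$-Cartier precisely because $W$ is, is indeed the key idea behind the result. The gap is in the concluding step. First, the presentation you invoke is not the standard one: the standard test-element description feeds $c\cdot\O_V$ into the maps (the divisor enters only through the Hom-sets), not $c\cdot\O_V(\ru{(p^e-1)W})$, and with the enlarged input the sum can be strictly larger than the test ideal. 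More seriously, the containment you need, $\tilde\phi\bigl(F^e_*(c\cdot\O_V(\ru{(p^e-1)(W+\G_\phi)}))\bigr)\subseteq\t(V,W+\G_\phi)$, is false. Take $V=\A^1=\Spec k[x]$ and $W=0$, let $\Phi_e$ generate $\Hom_{\O_V}(F^e_*\O_V,\O_V)$, and let $\phi$ be defined by $\phi(F^e_*y)=\Phi_e(F^e_*(x^{N(p^e-1)}y))$. Then $\G_\phi=N\divisor(x)$ and $\t(V,W+\G_\phi)=(x^N)$, whereas for any nonzero $c$ and any $e$ with $p^e>\ord_x(c)$ one computes
\[
\tilde\phi\bigl(F^e_*(c\cdot\O_V(N(p^e-1)\divisor(x)))\bigr)=\Phi_e(F^e_*\,c\,\O_V)=\O_V\not\subseteq(x^N).
\]
The same example rules out the uniform choice of $c$ you assume: any $c$ for which your presentation of $\t(V,W+\G)$ is valid must lie in $\t(V,W+\G)$ (look at the $e=0$ term), and the ideals $\t(V,W+\G_\phi)=(x^N)$ that actually occur in the argument intersect in zero, so no single nonzero $c$ can serve all the pairs at once.

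Note also that the failure is not repaired merely by switching to the correct input $c\cdot\O_V$: in the example above one still has $\phi(F^e_*\,c\,\O_V)\not\subseteq\t(V,W+\G_\phi)$ as soon as $N>\ord_x(c)$, so a term-by-term matching against a fixed test element cannot work. A mechanism that does work keeps your $\G_\phi$ but argues by minimality: show that $\Sigma:=\sum_\G\t(V,W+\G)$ is a nonzero ideal satisfying $\phi(F^e_*\Sigma)\subseteq\Sigma$ for every $e$ and every $\phi\in\Hom_{\O_V}(F^e_*\O_V(\ru{(p^e-1)W}),\O_V)$; since $\t(V,W)$ is the smallest such ideal, this gives the hard inclusion. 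For this one needs a transformation rule stating that $\phi$, with associated boundary $W+\G_\phi$, sends $F^e_*\t(V,W+\G)$ into $\t\bigl(V,W+\tfrac{1}{p^e}(\G+(p^e-1)\G_\phi)\bigr)$; the new boundary is again admissible because $K_V+\tfrac{1}{p^e}(\G+(p^e-1)\G_\phi)$ is a $\Q$-linear combination of the $\Q$-Cartier classes $K_V+\G$, $K_V+\G_\phi$ and $W$ (this is where the $\Q$-Cartier hypothesis on $W$ enters, as you correctly anticipated). Alternatively, one can simply follow the argument of \cite[Corollary~5.2]{Sch11}. As written, your proof of the inclusion $\t(V,W)\subseteq\sum_\G\t(V,W+\G)$ does not go through.
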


We now recall how test ideals behave under reduction to characteristic
zero.  We refer to the sources \cite[Section 2.2]{MS11} and \cite{HH99} for
a detailed description of the process of reduction to characteristic zero.

Let $X$ be a normal scheme of finite type over a field $k$ of
characteristic zero and $Z \subsetneq X$ be a closed subscheme. Choosing a
suitable finitely generated $\Z$-subalgebra $A$ of $k$, we can construct a
scheme $X_A$ of finite type over $A$ and closed subschemes $Z_{A}
\subsetneq X_A$ such that $(Z_{A} \hookrightarrow X_A) \otimes_A k \cong (Z
\hookrightarrow X)$. We can enlarge $A$ by localizing at a single nonzero
element and replacing $X_A$ and $Z_{A}$ with the corresponding open
subschemes. Thus, applying generic freeness \cite[(2.1.4)]{HH99}, we may
assume that $X_A$ and $Z_{A}$ are flat over $\Spec A$. We refer to $(X_A,
Z_A)$ as a \textit{model} of $(X, Z)$ over $A$.
If $Z$ is a prime divisor on $X$, then possibly enlarging $A$, we may
assume that $Z_A$ is a prime divisor on $X_A$. When $D=\sum_i d_i D_i$ is a
$\Q$-divisor on $X$, let $D_A:=\sum_i d_i D_{i, A}$.

Given a closed point $\mu \in \Spec A$, we denote by $X_{\mu}$
(resp.~$Z_{\mu}$, $D_{i, \mu}$) the fiber of $X_A$ (resp.~$Z_{A}$,
$D_{i,A}$) over $\mu$ and denote $D_{\mu}:=\sum_i d_i D_{i, \mu}$. Then
$X_{\mu}$ is a scheme of finite type over the residue field $\kappa(\mu)$
of $\mu$, which is a finite field of characteristic~$p(\mu)$. After
enlarging $A$ if necessarily, $X_\mu$ is normal, the $D_{i,\mu}$ are prime
divisors, and $D_\mu$ is a $\Q$-divisor on $X_\mu$ for all closed points
$\mu \in \Spec A$.

\begin{question}
\label{q:main}
Let $X$ be a normal variety over an algebraically closed field $k$ of
characteristic~0 and $Z$ be an effective $\Q$-Cartier $\Q$-divisor on
$X$. Suppose we are given a model of $(X, Z)$ over a finitely generated
$\Z$-subalgebra $A$ of $k$. Then does there exist a dense open subset $S
\subseteq \Spec A$ such that 
\[
\J(X,Z)_{\mu}= \t(X_{\mu},Z_{\mu})
\]
for every closed point $\mu \in S$?
\end{question}

When $K_X$ is $\Q$-Cartier, this property was established by Smith and Hara
\cite{Smi00,Har01}. Their result was extended to all log pairs by Takagi, 
who proved the following result.

\begin{thm}[\protect{\cite[Theorem~3.2]{Tak04}}]
\label{t:Tak}
With the above notation, suppose that $\D$ is an effective $\Q$-divisor on $X$
such that $K_X + \D$ is $\Q$-Cartier. 
Given a model of $(X, \D, Z)$ over a finitely generated $\Z$-subalgebra $A$ of $k$, 
there exists a dense open subset $S$ of $\Spec A$ such that for all closed points $\mu \in S$, 
\[
\J((X,\D),Z)_{\mu} = \t(X_{\mu},\D_{\mu}+Z_{\mu}).
\]
\end{thm}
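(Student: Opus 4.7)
The plan is to spread $(X,\D,Z)$ together with a chosen log resolution over $\Spec A$, reduce the claim to fiberwise equality at closed points of a dense open $S\subseteq\Spec A$, and handle the two inclusions separately: the easy one via the minimality characterization of the test ideal, the harder one via the construction of explicit Frobenius trace maps using Grothendieck duality on the log resolution.

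First, fix a log resolution $\p\colon Y\to X$ of $((X,\D),Z)$ and compatible canonical divisors with $\p_*K_Y=K_X$, so that
\[
\J((X,\D),Z)=\p_*\O_Y\bigl(\lceil K_Y-\p^*(K_X+\D+Z)\rceil\bigr).
\]
Spread out $X$, $Y$, $\p$, the divisors $K_X,K_Y,\D,Z$, the simple-normal-crossing structure of $\Ex(\p)\cup\Supp(\p^*(\D+Z))$, and the Cartier-index relation witnessing $K_X+\D$ as $\Q$-Cartier, over a finitely generated $\Z$-subalgebra $A$ of $k$. Shrinking $\Spec A$, one may assume that $Y_A$ is $A$-smooth, $\p_A$ is proper, the Cartier index $r$ of $K_X+\D$ is invertible on $\Spec A$, $X_\mu$ is normal for every closed $\mu$, and the relevant higher direct images commute with base change. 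Grauert--Riemenschneider vanishing in characteristic zero, combined with cohomology-and-base-change, then gives for every closed $\mu$ in a dense open $S\subseteq\Spec A$ the log-resolution formula in positive characteristic:
\[
\J((X,\D),Z)_\mu=\p_{\mu*}\O_{Y_\mu}\bigl(\lceil K_{Y_\mu}-\p_\mu^*(K_{X_\mu}+\D_\mu+Z_\mu)\rceil\bigr).
\]

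For $\t(X_\mu,\D_\mu+Z_\mu)\subseteq\J((X,\D),Z)_\mu$, I would invoke the minimality of $\t$: it suffices to show the displayed right-hand side is a nonzero ideal stable under every $\phi\in\Hom_{\O_{X_\mu}}\bigl(F^e_*\O_{X_\mu}(\lceil(p^e-1)(\D_\mu+Z_\mu)\rceil),\O_{X_\mu}\bigr)$. Grothendieck duality for the smooth Frobenius on $Y_\mu$ and for the proper birational morphism $\p_\mu$ lifts any such $\phi$ to a map of line bundles on $Y_\mu$ governed by the twist $(1-p^e)(K_{Y_\mu}-\p_\mu^*(K_{X_\mu}+\D_\mu+Z_\mu))$; the snc geometry of the log resolution then makes it routine to verify that $\phi$ carries the displayed pushforward sheaf into itself. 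For the reverse inclusion $\J((X,\D),Z)_\mu\subseteq\t(X_\mu,\D_\mu+Z_\mu)$, I further restrict $S$ to the dense open set of closed points whose residue characteristic $p$ is coprime to $r$, and for each such $\mu$ pick $e\geq 1$ with $r\mid p^e-1$. Then $(p^e-1)(K_{X_\mu}+\D_\mu)$ is Cartier, and Grothendieck duality applied to $F^e$ and $\p_\mu$ produces an explicit $\O_{X_\mu}$-linear map
\[
\phi_e\colon F^e_*\O_{X_\mu}\bigl(\lceil(p^e-1)(\D_\mu+Z_\mu)\rceil\bigr)\lra\O_{X_\mu}
\]
whose image coincides with the right-hand side of the displayed formula above, provided the relative vanishing
\[
R^i\p_{\mu*}\O_{Y_\mu}\bigl(\lceil K_{Y_\mu}-p^e\p_\mu^*(K_{X_\mu}+\D_\mu+Z_\mu)\rceil\bigr)=0\quad\text{for }i>0
\]
holds on $Y_\mu$. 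This vanishing is Kawamata--Viehweg on the log resolution in characteristic zero, and by semicontinuity of higher direct images it persists on a further dense open of $\Spec A$. Since $\phi_e$ is one of the maps governing the test ideal and $\t\neq 0$, its image $\J((X,\D),Z)_\mu$ is contained in $\t(X_\mu,\D_\mu+Z_\mu)$.

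The main obstacle is the reverse inclusion: one must not merely verify stability under Frobenius trace maps but construct trace maps whose images collectively realize the multiplier ideal, which forces the relative Kawamata--Viehweg-type vanishing on $\p_\mu\colon Y_\mu\to X_\mu$ to be preserved after reduction mod $p$. The $\Q$-Cartier hypothesis on $K_X+\D$ is exactly what makes $(p^e-1)(K_{X_\mu}+\D_\mu)$ an integral Cartier divisor for a dense set of primes, and the delicate step is shrinking $A$ carefully enough that the integrality, the snc structure, and the mod-$p$ vanishing all hold simultaneously on a single dense open $S\subseteq\Spec A$.
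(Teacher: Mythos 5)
The statement you are proving is not proved in this paper at all: it is quoted from Takagi's work, so the comparison is with the known proof there (and in Hara's and Smith's earlier $\Q$-Gorenstein versions). Your first half — spreading out a log resolution, reducing the log-resolution formula for $\J((X,\D),Z)$ mod $p$, and getting $\t(X_\mu,\D_\mu+Z_\mu)\subseteq\J((X,\D),Z)_\mu$ by showing the pushforward sheaf is nonzero and stable under every $\phi\in\Hom_{\O_{X_\mu}}(F^e_*\O_{X_\mu}(\lceil(p^e-1)(\D_\mu+Z_\mu)\rceil),\O_{X_\mu})$ and invoking minimality — is correct and is essentially the standard route (it is the argument behind the paper's own Proposition on the easy inclusion, and Theorem 2.13 of Takagi's paper).

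The hard inclusion $\J((X,\D),Z)_\mu\subseteq\t(X_\mu,\D_\mu+Z_\mu)$ has a genuine gap, in two places. First, the closing inference ``$\phi_e$ is one of the maps governing the test ideal, hence its image is contained in $\t$'' is false: the test ideal is the \emph{smallest} nonzero ideal compatible with all such maps, so only images of the form $\phi(F^e_*J)$ with $J\subseteq\t$ (for instance $J$ generated by a test element $c$) are guaranteed to land in $\t$; the image of $\phi$ applied to the full sheaf need not be (a Frobenius splitting of an $F$-pure, non--strongly-$F$-regular variety has image $\O_X\not\subseteq\t$). What must actually be shown is that $\J_\mu$ is contained in $\sum_{e,\phi}\phi\bigl(F^e_*(c\cdot\O_{X_\mu})\bigr)$ for a single element $c$ chosen in characteristic zero, uniformly in $\mu$. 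Second, the surjectivity you would then need — that the pushforward under $\pi_\mu$ of the Frobenius trace, twisted by the fixed $c$, still surjects onto $\pi_{\mu*}\O_{Y_\mu}(\lceil K_{Y_\mu}-\pi_\mu^*(K_{X_\mu}+\D_\mu+Z_\mu)\rceil)$ as $p^e\to\infty$ — does not follow from relative Kawamata--Viehweg vanishing of the source line bundle reduced mod $p$. Surjectivity after applying $\pi_{\mu*}$ requires the vanishing of $R^1\pi_{\mu*}$ of the \emph{kernel} of the trace/Cartier map, which is a twist of the sheaf of exact top forms $B^d_{Y_\mu}$, not a line bundle; controlling it is exactly Hara's theorem, proved via Deligne--Illusie degeneration of the log Hodge--de Rham spectral sequence (using the $W_2$-lift coming from characteristic zero) together with Akizuki--Nakano--type log vanishing reduced mod $p$. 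This is the engine of the proofs of Hara and Takagi and it is absent from your sketch. A further, more minor, issue: the vanishing $R^i\pi_{\mu*}\O_{Y_\mu}(\lceil K_{Y_\mu}-p^e\pi_\mu^*(K_{X_\mu}+\D_\mu+Z_\mu)\rceil)=0$ concerns sheaves that depend on $p^e$, hence on $\mu$, so one semicontinuity argument does not yield a single dense open $S$; this can be repaired with the projection formula after choosing $e$ with $p^e\equiv1$ modulo the Cartier indices of $K_X+\D$ and $Z$, but it needs to be said.
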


Question~\ref{q:main} remains in open in general. 
Theorem~\ref{t:main} settles the question when $K_X$ is numerically $\Q$-Cartier

\begin{rmk}
In the setting of Theorem~\ref{t:Sch}, it is an open question whether
$\t(V,W)$ is actually equal to $\t(V,\G+W)$ for some effective
$\Q$-divisor $\G$ such that $K_V + \G$ is $\Q$-Cartier. Furthermore,
suppose $V=X_{\mu}$ is a general closed fiber of a deformation $X_A \to \Spec
A$, where $X$ is a normal variety defined over a field of
characteristic~0 and $A$ is a finitely generated $\Z$-subalgebra of $k$.
Similarly, suppose $W=Z_{\mu}$ is obtained from an effective $\Q$-divisor
$Z$ on $X$. If we assume that there exists an effective $\Q$-divisor
$\Gamma$ on $V$ such that $\t(X_{\mu},Z_{\mu}) =
\t(X_{\mu},\Gamma+Z_{\mu})$, it is also an open question whether such
$\G$ could be reduced from characteristic~0 (cf.\
\cite[Question~6.3]{Sch11}).
\end{rmk}

\section{Proof of the theorem}
\label{s:proof}

As in Theorem~\ref{t:main}, let $X$ be a numerically $\Q$-Gorenstein
variety over an uncountable algebraically closed field $k$ of characteristic~0, and
let $Z$ be an effective $\Q$-Cartier $\Q$-divisor on $X$. Suppose we are
given a model of $(X, Z)$ over a finitely generated $\Z$-subalgebra $A$ of
$k$. Let $\p \colon Y \to X$ be a fixed projective log resolution of
$(X,Z)$. After possibly enlarging $A$, we may assume that we have a model
of $\pi$ over $A$, and that $\pi_{\mu}$ is a projective log resolution of
$(X_{\mu}, Z_{\mu})$ for all closed points $\mu \in \Spec A$. 

It follows from Theorem~\ref{t:dFH} that  
\[
\J(X,Z) = \J((X,\D),Z)
\]
for some effective $\Q$-divisor $\D$ on $X$ such that $K_X + \D$ is
$\Q$-Cartier. Enlarging $A$ if necessary, we may assume that a model of
$\Delta$ over $A$ is given, $K_{X_{\mu}}+\Delta_{\mu}$ is $\Q$-Cartier and 
\[
\J(X,Z)_{\mu} = \J((X,\D),Z)_{\mu} 
\]
for  all closed points $\mu \in \Spec A$. By Theorem~\ref{t:Tak}, after
possibly enlarging $A$, we may assume that
\[
\J((X,\D),Z)_{\mu} = \t(X_{\mu},\D_{\mu}+Z_{\mu})
\]
for all closed points $\mu \in \Spec A$.
Then, since by Theorem~\ref{t:Sch} we have
\[
\t(X_{\mu},Z_{\mu}) = \sum_\G \t(X_{\mu},\G+Z_{\mu}),
\]
where the sum runs over all effective $\Q$-divisors $\G$ on $X_{\mu}$ such
that $K_{X_{\mu}} + \G$ is $\Q$-Cartier, we obtain 
\[
\J(X,Z)_{\mu} \subseteq \t(X_{\mu}, Z_{\mu})
\]
for all closed points $\mu \in \Spec A$. 

To finish the proof, we need to show that the reverse inclusion 
\begin{equation}
\label{eq:normallyeasyinclusion}
\t(X_{\mu},Z_{\mu}) \subseteq \J(X,Z)_{\mu} 
\end{equation}
also holds for almost all $\mu \in \Spec A$. It suffices to show that there
exists a dense open subset $S \subseteq \Spec A$ such that for every closed
point $\mu \in S$ and every effective $\Q$-divisor $\G$ on $X_{\mu}$ such
that $K_{X_{\mu}}+\G$ is $\Q$-Cartier,  one has 
\[
\t(X_{\mu},\G+Z_{\mu}) \subseteq \J(X,Z)_{\mu}. 
\]
On the one hand, by \cite[Theorem 2.13]{Tak04} (see also \cite{BST}) there
is always an inclusion
\[
\t(X_\mu,\G+Z_\mu) \subseteq 
{\p_{\mu}}_*\O_{Y_{\mu}}
(\ru{K_{Y_{\mu}} - \p_\mu^*(K_{X_{\mu}}+\G + Z_{\mu})}).
\]
Since by Theorem~\ref{t:BdFFU} we have
\[
\J(X,Z) = \p_*\O_Y(\ru{K_{Y/X}^{\rm num} - \p^*Z}),
\]
after possibly enlarging $A$, we may assume that 
\[
\J(X,Z)_{\mu} 
= 
{\p_{\mu}}_*\O_{Y_{\mu}}
(\ru{(K_{Y/X}^{\rm num})_{\mu} - \p_{\mu}^*Z_{\mu}})
\]
for all closed points $\mu \in \Spec A$. Therefore, the proof of the
theorem is complete once we establish the following property. 

\begin{prop}
\label{p:ineq}
There exists a dense open subset $S \subset \Spec A$ such that for every
closed point $\mu \in S$,  
\[
K_{Y_{\mu}} - \p_{\mu}^*(K_{X_{\mu}} + \G) 
\le 
(K_{Y/X}^{\rm num})_\mu.
\]
\end{prop}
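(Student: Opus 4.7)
The plan is to rewrite the claimed inequality as the effectivity of an explicit $\p_\mu$-exceptional $\Q$-divisor on $Y_\mu$ and then to derive this effectivity from the negativity lemma in positive characteristic. Fixing compatible canonical divisors, the inequality $K_{Y_\mu}-\p_\mu^*(K_{X_\mu}+\G)\le (K_{Y/X}^{\rm num})_\mu$ is equivalent to $(\p^*_{\rm num}K_X)_\mu \le \p_\mu^*(K_{X_\mu}+\G)$, and since the difference pushes forward to $\G$ with non-exceptional part precisely $\~\G$, it reduces to the effectivity of the $\p_\mu$-exceptional $\Q$-divisor
\[
D := \p_\mu^*(K_{X_\mu}+\G) - (\p^*_{\rm num}K_X)_\mu - \~\G.
\]

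The first substantive step is to spread out the numerical pullback and verify that $\p$-numerical triviality is preserved in the fibers. Writing $\p^*_{\rm num}K_X = K_Y + \sum_{i=1}^n a_i E_i$ with $E_1,\dots,E_n$ the $\p$-exceptional prime divisors and $a_i \in \Q$, I would pick one general $\p$-contracted test curve $c_i$ in each $E_i$. The coefficients $a_i$ are then uniquely determined by the linear system $(K_Y + \sum_j a_j E_j) \. c_i = 0$, whose matrix is non-degenerate because $c_1,\dots,c_n$ span the relative $1$-cycle space $N_1(Y/X)_\Q$. After enlarging $A$ so that this data spreads out, intersection numbers are locally constant on $\Spec A$, and the non-vanishing of the matrix determinant modulo $\mu$ is an open condition. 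Hence for every closed point $\mu$ in a dense open $S\subseteq\Spec A$, the $c_{i,\mu}$ form a basis of $N_1(Y_\mu/X_\mu)_\Q$ and $(\p^*_{\rm num}K_X)_\mu$ is $\p_\mu$-numerically trivial.

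Given this, for any $\p_\mu$-exceptional irreducible curve $C$ on $Y_\mu$ not contained in $\Supp(\~\G)$, we may compute
\[
D \. C = \p_\mu^*(K_{X_\mu}+\G) \. C - (\p^*_{\rm num}K_X)_\mu \. C - \~\G \. C = -\~\G \. C \le 0,
\]
the first two terms vanishing respectively because $\Q$-Cartier pullbacks are trivial on contracted curves and by the previous step. To upgrade this to nonpositivity for \emph{all} $\p_\mu$-exceptional curves---the hypothesis of the negativity lemma (valid in arbitrary characteristic) needed to conclude $D \ge 0$---I would cut $Y_\mu$ by $\dim Y_\mu - 2$ general hyperplane sections to obtain a smooth surface $S$ on which $\p_\mu|_S$ is still birational. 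By Bertini, each $E_i \cap S$ is a smooth irreducible curve meeting every component of $\~\G \cap S$ transversally, so $\~\G \. (E_i\cap S) \ge 0$ for each $i$. The surface negativity lemma, via negative definiteness of the exceptional intersection matrix on $S$, then gives $D|_S \ge 0$, which is equivalent to $D \ge 0$ since the exceptional coefficients of $D$ can be read off from a general surface section.

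The main obstacle lies in the specialization argument of the second step: while intersection numbers are locally constant in flat families, one must ensure \emph{uniformly} over a dense open in $\Spec A$ that the specialized test curves continue to form a basis of $N_1(Y_\mu/X_\mu)_\Q$, which ultimately reduces to non-vanishing of the integer determinant of the exceptional intersection matrix modulo $p(\mu)$ and to the relative Picard rank $\rho(Y_\mu/X_\mu)$ still equalling the number of $\p$-exceptional divisors. The difficulty with $\p_\mu$-exceptional curves contained in $\~\G$ in the last step is a secondary technicality, handled by the hyperplane-cutting reduction above, in which no surface-exceptional curve coincides with a component of the strict transform by Bertini.
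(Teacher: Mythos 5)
Your proposal diverges from the paper's proof in a way that exposes two genuine gaps, one of which the paper explicitly identifies as the central difficulty to be avoided.

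\textbf{Gap in the specialization of numerical triviality.} Your second step aims to show that $(\p^*_{\rm num}K_X)_\mu$ is $\p_\mu$-numerically trivial for $\mu$ in a dense open. Spreading out the curves $c_i$ and using constancy of intersection numbers gives $(\p^*_{\rm num}K_X)_\mu \cdot c_{i,\mu} = 0$, but to conclude numerical triviality you need the classes $c_{i,\mu}$ to span $N_1(Y_\mu/X_\mu)_\Q$. The nondegeneracy of the matrix $(E_i \cdot c_j)$ only shows the $c_{i,\mu}$ span a subspace dual to the span of the exceptional divisors; it does not prevent $N_1(Y_\mu/X_\mu)$ from being strictly larger (which can already occur in characteristic $0$ for non-$\Q$-factorial $X$, and which a priori can further jump under specialization). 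You flag this yourself as ``the main obstacle,'' but it is not a technicality to be smoothed over: the paper devotes the end of Section~\ref{sec:some-open-questions} to explaining that whether numerical pull-back is preserved under reduction is an \emph{open question}, one whose resolution would have allowed a much shorter proof via Proposition~\ref{prop:easyinclusionnumgor}. The whole point of Lemma~\ref{l:NL} and the structure of the actual proof is to sidestep this issue.

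\textbf{Gap in the hyperplane cutting.} Your third step cuts $Y_\mu$ by $\dim Y_\mu - 2$ general hyperplanes to obtain a surface $S$ and then invokes the surface Negativity Lemma using the curves $E_i \cap S$. But if $\p_\mu(E_i)$ has positive dimension, $E_i \cap S$ is typically \emph{not} $\p_\mu|_S$-exceptional: for instance, if $E_i$ is a $\P^1$-bundle over a curve $C \subset X_\mu$, a general hyperplane slice of $E_i$ is a section mapping isomorphically onto $C$. The surface Negativity Lemma says nothing about the coefficient of such a non-exceptional curve in $D|_S$, so the coefficients of those $E_i$ in $D$ are left undetermined. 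This is exactly the phenomenon the paper's Lemma~\ref{l:NL} is designed to handle: it stratifies the exceptional divisors by the codimension $e$ of their image, constructs a different surface $T_e$ for each stratum by cutting with $e-1$ pulled-back sections of $X$ and the rest in $Y$, and then runs an induction on $e$.

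\textbf{How the paper avoids both.} The paper's proof never asserts that $(\p^*_{\rm num}K_X)_\mu$ is $\p_\mu$-numerically trivial. It fixes, once and for all in characteristic $0$, a finite set of curves $C_1,\dots,C_s$ satisfying the detection property of Lemma~\ref{l:NL}, spreads these out, and computes $F_\mu \cdot C_{j,\mu}$ using constancy of intersection numbers. The crucial vanishing $(\p^*_{\rm num}K_X) \cdot C_j = 0$ holds by definition in characteristic $0$; the Negativity-Lemma-type conclusion $F \le 0$ is drawn in characteristic $0$ for the lifted divisor $F$, and then specialized back to $F_\mu \le 0$. This rearrangement is what makes the argument robust and is the genuine content missing from your proposal.
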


The proof of the proposition uses the Negativity Lemma (cf.\
\cite[Lemma~3.39]{KM98}) in the following more precise formulation. We say
that an irreducible projective curve $C$ in a variety $X$ is \emph{movable
in codimension~1} if it belongs to an irreducible family of curves whose
locus in $X$ is dense in a codimension~1 subset of $X$. 

\begin{lem}
\label{l:NL}
Let $\p \colon Y \to X$ be a projective resolution of a normal variety over
an uncountable algebraically closed field of characteristic~0, and assume that
the exceptional locus of $\p$ has pure codimension~1. Then there are finitely many irreducible
curves $C_1,\dots,C_s$ in $Y$ such that 
\begin{enumerate}
\item
each $C_j$ is $\p$-exceptional and movable in codimension~1, and
\item
for every divisor $D$ on $Y$, if $D\.C_j \ge 0$ for all $j$ and 
$\p_*D \le 0$, then $D \le 0$. 
\end{enumerate}
\end{lem}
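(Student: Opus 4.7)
The plan is to take $C_1,\dots,C_s$ to be one curve per $\p$-exceptional prime divisor $E_i$, constructed as a general complete-intersection curve inside a general fiber of the restriction $\p|_{E_i}\colon E_i\to\p(E_i)$, and then to deduce the implication in~(2) via an $M$-matrix argument applied to the intersection matrix $(E_j\cdot C_i)$. For the construction: since $X$ is normal, Zariski's Main Theorem gives $\codim_X\p(E_i)\ge 2$, so a general fiber $F_i$ of $\p|_{E_i}$ satisfies $\dim F_i\ge 1$. Fixing a very ample divisor $H$ on $Y$ and intersecting $F_i$ with $\dim F_i-1$ general sections of $|H|$ yields, by Bertini, an irreducible curve $C_i\subset F_i$. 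By construction $\p(C_i)$ is a point, so $C_i$ is $\p$-exceptional; moreover, as the base point of $F_i$ varies in $\p(E_i)$ and the sections of $|H|$ vary, the family of curves so produced covers $E_i$, so $C_i$ is movable in codimension~$1$, settling~(1). The movability has the further key consequence that for every prime divisor $F\ne E_i$ on $Y$ the intersection number $F\cdot C_i$ equals the generic value of the family and is $\ge 0$, because a generic member avoids being contained in the proper divisor $F|_{E_i}$ on $E_i$; in particular $E_j\cdot C_i\ge 0$ for $j\ne i$.

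To prove~(2), decompose $D=D^{\rm nex}+\sum_i a_i E_i$ into non-exceptional and exceptional parts. The hypothesis $\p_*D\le 0$ together with the injectivity of $\p_*$ on non-exceptional primes gives $D^{\rm nex}\le 0$, and applying the movability consequence to $-D^{\rm nex}$, which is an effective sum of non-exceptional primes, yields $D^{\rm nex}\cdot C_i\le 0$; hence $\sum_j a_j(E_j\cdot C_i)\ge 0$ for every $i$. Independently, since $\p$ is projective birational with $X$ normal, a standard result provides an effective $\p$-exceptional $\Q$-divisor $A=\sum_j m_j E_j$ with all $m_j>0$ such that $-A$ is $\p$-ample. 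Intersecting with $C_i$ gives $\sum_j m_j(E_j\cdot C_i)<0$, and since $E_j\cdot C_i\ge 0$ for $j\ne i$, this forces $E_i\cdot C_i<0$.

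With these preparations, set $N_{ij}:=-E_j\cdot C_i$. Then $N$ has positive diagonal and non-positive off-diagonal entries, and admits the positive vector $\mathbf{m}=(m_j)$ with $(N\mathbf{m})_i=-A\cdot C_i>0$. The standard characterization of non-singular $M$-matrices then asserts that $N$ is invertible with $N^{-1}\ge 0$ entrywise. The condition $\sum_j a_j(E_j\cdot C_i)\ge 0$ translates to $(N\mathbf{a})_i\le 0$; multiplying by $N^{-1}\ge 0$ yields $\mathbf{a}\le 0$ componentwise, so $a_i\le 0$ for all $i$ and $D\le 0$.

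The main obstacle I anticipate is the uniform movability step: to obtain $F\cdot C_i\ge 0$ for every prime $F\ne E_i$ simultaneously (and not just for a preassigned finite list of divisors appearing in a specific~$D$) requires using that $C_i$ genuinely deforms in an algebraic family covering $E_i$, so that the constancy of intersection numbers along the family yields the bound for our fixed representative. The existence of a $\p$-ample effective exceptional divisor used to pin down $E_i\cdot C_i<0$ is also nontrivial but is a standard consequence of the relative projectivity of $\p$; once both ingredients are in place, the passage to the $M$-matrix conclusion is routine.
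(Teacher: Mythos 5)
Your proof is correct in its essentials, but it takes a genuinely different route from the paper. The paper cuts $Y$ by very general hyperplane sections $H_3,\dots,H_e$ and preimages of very general hyperplane sections $L_{e+1},\dots,L_n$ of $X$ to produce a surface $T_e$ for each codimension $e$, lets the $C_{e,j}$ be the exceptional curves of $T_e\to S_e$, and then proves $D\le 0$ by induction on the codimension of the image of the components of $D$, invoking the classical two-dimensional Negativity Lemma (Hodge index) on each $T_e$. Your proof instead builds one moving curve $C_i$ per exceptional prime $E_i$ and replaces the induction by a single linear-algebra step via the $M$-matrix criterion applied to $(-E_j\cdot C_i)$. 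Both arguments are valid, and they make a similar use of movability (to get $F\cdot C_i\ge 0$ for $F\ne E_i$); but the ingredients are different. What your approach buys is conceptual economy — no stratification by codimension, no auxiliary surfaces, and the uncountability hypothesis is not needed. What it costs is the appeal to the fact that there exists an effective $\pi$-exceptional $A$ with $-A$ $\pi$-ample and $\Supp A=\Exc(\pi)$. You present this as ``a standard consequence of the relative projectivity of $\pi$,'' which understates it: the cleanest route is the refinement of Hironaka's theorem asserting that $Y\cong\Bl_{\cJ}X$ for an ideal $\cJ$ cosupported on the non-isomorphism locus of $\pi$, and while this is true and available in the literature, it is not a one-line deduction, especially when $X$ is not $\Q$-factorial (note that the naive attempt $A=\pi^*(\pi_*H)-H$ fails because $\pi_*H$ need not be $\Q$-Cartier). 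A reference or a short proof of this input would make your argument airtight. Two further small points: the statement ``all $m_j>0$'' is a consequence, not an extra hypothesis — if $E_j\not\subset\Supp A$ then $A\cdot C\ge 0$ for $C$ a contracted curve moving in $E_j$, contradicting $\pi$-ampleness of $-A$, so it is worth spelling out; and the constancy-of-intersection-numbers step implicitly uses that $C_i$ lies in a flat family covering $E_i$, which is true for a general member but deserves a sentence.
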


\begin{proof}[Proof of Proposition~\ref{p:ineq}]
Let $E_1, \dots, E_t$ be the prime exceptional divisors of $\p \colon Y \to
X$, and let $C_1, \dots, C_s$ be the irreducible curves on $Y$ satisfying
the properties listed in Lemma~\ref{l:NL}. 

By possibly enlarging $A$, we can assume that $E_{1,\mu}, \dots, E_{t,\mu}$
are the prime exceptional divisors of $\p_\mu \colon Y_\mu \to X_\mu$, and
that $C_{1,\mu}, \dots, C_{s,\mu}$ are $\p_\mu$-exceptional and movable in
codimension~1. Furthermore, we assume that the reduction is such that
$E_{i,\mu} \. C_{j,\mu} = E_i \. C_j$ and $K_{Y_\mu}\.C_{j,\mu} =  K_Y \.
C_j$ for all $i,j$ and all $\mu$. 

Fix $\mu \in \Spec A$. We consider the $\Q$-divisor
\[
F_\mu := 
K_{Y_\mu} - \p_\mu^*(K_{X_\mu} + \G) - 
(K_{Y/X}^{\rm num})_\mu + (\p_\mu)_*^{-1}\G,
\]
where $\G$ is an arbitrary effective $\Q$-divisor such that $K_{Y_\mu} +
\G$ is $\Q$-Cartier. We need to show that $F_\mu$ is anti-effective. Since it is
exceptional, we can write
\[
F_\mu = \sum a_{i} E_{i,\mu}.
\]
Then $F_\mu$ lifts to characteristic~0, as it is the reduction of the
divisor 
\[
F = \sum a_i E_i.
\] 
So it suffices to check that $F \le 0$. To this end, by Lemma~\ref{l:NL} we
only need to check that $F\.C_j \ge 0$ for all $j$. 

Note that $\p_\mu^*(K_{X_\mu} + \G)\.C_{j,\mu} = 0$ since $\p_\mu^*(K_{X_\mu}
+ \G)$ is $\p_\mu$-numerically trivial. Moreover, since $C_{i,\mu}$ is
$\pi_\mu$-exceptional and movable in codimension~1, and
$(\pi_\mu)_*^{-1}\G$ does not contain any $\p_\mu$-exceptional component,
we have that $(\p_\mu)_*^{-1}\G \. C_{j,\mu} \ge 0$. Therefore:
\begin{align*}
F \. C_j 
&= F_\mu \. C_{j,\mu} \\
&\ge (K_{Y_\mu} - (K_{Y/X}^{\rm num})_\mu)\.C_{j,\mu} \\
&=(K_Y - K_{Y/X}^{\rm num})_\mu\.C_{j,\mu} \\
&=(K_Y - K_{Y/X}^{\rm num})\.C_j \\
&=(\pi^*_{\rm num}K_X) \. C_j \\
&= 0.
\end{align*}
The last equality follows from the fact that $\pi^*_{\rm num}K_X$ is, by
definition, $\pi$-numerically trivial, and that $C_j$ is $\pi$-exceptional.
\end{proof}

\begin{proof}[Proof of Lemma~\ref{l:NL}]
Our goal is to reduce to the usual Negativity Lemma for surfaces
(essentially the Hodge Index Theorem, see~\cite[Lemmas~3.40, 3.41]{KM98}).
The idea is that we can check for anti-effectivity of a divisor $D \subset
Y$ by considering its restrictions $D|_T$, where $T \subset Y$ is a
subsurface. But $D|_T$ will be anti-effective if intersects non-negatively
with the $\p$-exceptional curves of $T$. The problem is that in principle
one might need to use a different $T$ depending on the divisor $D$, and we
are lead to an infinite family of curves. The main observation is that,
given an integer $e$, it is possible to use the same $T$ for all divisors
$D$ whose irreducible components have image in $X$ of codimension $e$.

In more detail, we proceed as follows. First we assume that $X$ is
quasi-projective, which is allowed because the problem is local with
respect to $X$. Let $n \geq 3$ be the dimension of $X$. We fix very general
hyperplane sections $L_3, \dots, L_{n}$ in $X$, and $H_3, \dots, H_{n}$ in
$Y$. For every $e = 2, 3, \dots, n$ we consider the surface $T_e \subset Y$
given by
\[
T_e = 
(H_3 \cap \dots \cap H_e)
\cap 
\p^{-1}(L_{e+1} \cap \dots \cap L_n).
\]
Notice that $T_e$ is smooth. We let $S_e$ be the normalization of
$\p(T_e)$, and $\p_e\colon T_e \to S_e$ the map induced by $\p$. The
exceptional locus of $\p_e$ consists of finitely many curves, which we
denote $C_{e,1}, \dots, C_{e,m_e}$. We will show that the finite set of
curves
\[
\{\ C_{e,j} \ \mid \ e=2,3,\dots,n, \quad j=1,2,\dots,m_e \ \}
\]
has the properties required by the lemma.

Since $T_e$ is cut out by very general members in base-point free linear
systems, each of these curves $C_{e,j}$ is movable in codimension~1
inside $X$, and part (a) of the lemma holds. To prove part $(b)$, 
we need more information on how these curves $C_{e,j}$ intersect the irreducible
components of the exceptional locus of $\p$.

Let $E$ be an exceptional prime divisor, and let 
\[
B := \pi(E) \cap L_{e+1} \cap \dots \cap L_{n}.
\]
Note that
\[
T_e \cap E = H_3 \cap \dots \cap H_e \cap (\p|_E)^{-1}(B),
\]
where $\p|_E \colon E \to X$ is the restriction of $\p$ to $E$. 
Note also that $T_e \cap E$, if nonempty, has pure dimension 1. 
Denoting by $c$ the codimension of $\p(E)$ in $X$, we have the following cases:
\begin{enumerate}
\item[(i)]
If $c > e$, then $B$ is empty and $E$ is disjoint from $T_e$. In particular, $E\.C_{e,j} = 0$ for all $j$. 
\item[(ii)]
If $c = e$, then $B$ is zero-dimensional and $(\p|_E)^{-1}(B)$ is 
a union of general fibers of $\p|_E$. In particular, $T_e \cap E$ is nonempty and
each irreducible component is equal to $C_{e,j}$ for some $j$. 
\item[(iii)]
If $c < e$, then $B$ is an irreducible set of positive dimension, and 
$(\p|_E)^{-1}(B)$ is irreducible as well. Then $T_e \cap E$ is an irreducible
curve and is not $\p$-exceptional. In particular, $E$ does not contain $C_{e,j}$
for any $j$, and hence $E\.C_{e,j} \ge 0$ for all $j$.
\end{enumerate}

We now prove part (b) of the lemma. Suppose $D$ is a divisor on $Y$ such
that $\p_*D \le 0$ and $D\.C_{e,j} \ge 0$ for all $e,j$. Write $D = D_1 +
\dots + D_n$ where each $D_e$ is supported exactly on the irreducible
components of $D$ whose image in $X$ has codimension $e$. We claim that
$D_e \le 0$ for every $e$. We prove this by induction on $e$. For $e=1$,
this is the condition that $\p_*D \le 0$. 

Assume $e>1$. 
By~(ii), $T_e$ intersects properly every component of $D_e$, and
if $E$ is irreducible component of $D_e$ then $T_e \cap E$ is a general complete
intersection in $(\p|_E)^{-1}(B)$.
If $E'$ is a different irreducible component of $D_e$, 
and $B' := \p(E') \cap L_{e+1} \cap \dots \cap L_{n}$, 
then $(\p|_E)^{-1}(B)$ and $(\p|_{E'})^{-1}(B')$ do not have any components in common, 
since the hyperplane sections $L_{e+1},\dots,L_{n}$ are general. It follows that
the restrictions $E|_{T_e}$ and $E'|_{T_e}$ share no
common components. Therefore it is enough to show that $D_e|_{T_e} \leq 0$. 
Again by~(ii), we see that the irreducible
components of $D_e|_{T_e}$ are of the form $C_{e,j}$ for some $j$ (that is,
$D_e|_{T_e}$ is $\p_e$-exceptional). In particular, using the Negativity
Lemma for surfaces, it is enough to show that
$D_e|_{T_e}\.C_{e,j} \geq 0$ for all $j$. Since, by~(i), $D_{e'}\.C_{e,j} = 0$ whenever 
$e'>e$, we have that
\[
D \. C_{e,j} =
(D_1 + \dots + D_e) \. C_{e,j} =
(D_1 + \dots + D_{e-1}) \. C_{e,j} + D_e \. C_{e,j} .
\]
By induction we know that $D_1 \leq 0, \dots, D_{e-1} \leq 0$, and we know by~(iii) that
$C_{e,j}$ is not contained in the support of
$D_2 + \dots + D_{e-1}$. Moreover, $C_{e,j}$ is movable in codimension 1
and since it is $\p$-exceptional, the locus spanned by its deformations is $\p$-exceptional. 
This implies that the intersection of $C_{e,j}$ with each of the components of $D_1$ is
non-negative. Therefore we see that $(D_1 + \dots + D_{e-1}) \. C_{e,j}
\leq 0$. By hypothesis we know that $D \. C_{e,j} \geq 0$, so we must have
that $D_e|_{T_e} \. C_{e,j} = D_e \. C_{e,j} \geq 0$, as required.
\end{proof}

\section{Some open questions}
\label{sec:some-open-questions}

The notion of a numerically
$\Q$-Cartier divisor given above and studied in~\cite{BdFF12,BdFFU}
is, at first glance, particular to characteristic zero where
resolutions of singularities are known to exist.  However, in
arbitrary characteristic, one may just as well make use of the regular
alterations provided by \cite{dJ}.  Recall that a regular alteration
of a variety $X$ is a proper surjective morphism $\pi\colon Y \to X$
where $Y$ is regular and $\dim Y = \dim X$.
We extend Definition~\ref{d:num-Gor} to all characteristics as follows. 

\begin{defi}
A (Weil) divisor $D$ on a normal variety $X$ over an algebraically
closed field is said to be
\emph{numerically $\Q$-Cartier} if, for some regular alteration $\pi \colon Y \to X$, there is a ($\Q$-Cartier) $\Q$-divisor $D'$ on $Y$ that is $\pi$-numerically trivial (that is, it has zero
intersection with all $\pi$-exceptional curves on $Y$) and satisfies $D = \pi_* D'$.
When $D$ is numerically $\Q$-Cartier, we denote $D'$ by $\pi^*_{\rm
num}D$ and call it the \emph{numerical pull-back} of $D$ to $Y$.
\end{defi}

The fact that $D'$ in the definition is uniquely determined by the given conditions
follows by the Negativity Lemma \cite[Lemma~3.39]{KM98} applied to the Stein factorization of $\pi$. 
Since the pull-back of a Cartier divisor to a dominating regular alteration remains
relatively numerically trivial, the existence of a numerical pull-back
of $D$ to an arbitrary regular alteration follows exactly as in
\cite[Proposition 5.3]{BdFFU}, and one can replace $\p$ by an arbitrary
regular alteration in the definition. In particular, in characteristic zero the above definition
is equivalent to Definition~\ref{d:num-Gor}.

\begin{lem}
\label{lem:numercartieralterations}
Suppose that $D$ is a numerically $\Q$-Cartier $\Q$-divisor on $X$ and $\pi \colon Y \to
X$ is any regular alteration.  For any Weil divisor $E$ on $Y$, we have
\[
\O_{X}(\lfloor D \rfloor) \cdot \pi_{*}\O_{Y}(E) \subseteq \pi_{*} \O_{Y}(\lfloor E +
\pi^{*}_{\rm num}D \rfloor ).
\]
\end{lem}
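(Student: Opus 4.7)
The plan is to verify the claimed inclusion on local sections. Working on an affine open $U \subseteq X$, I take $f \in \Gamma(U, \O_X(\lfloor D \rfloor))$ and $g \in \Gamma(\pi^{-1}(U), \O_Y(E))$, i.e., $\divisor_X f + \lfloor D \rfloor \geq 0$ on $U$ and $\divisor_Y g + E \geq 0$ on $\pi^{-1}(U)$, and show that $(f\circ\pi)\,g$ lies in $\Gamma(\pi^{-1}(U), \O_Y(\lfloor E + \pi^*_{\rm num} D \rfloor))$. Since $\divisor_Y((f\circ\pi)g) + E$ has integer coefficients at every prime of $Y$, the desired integer-divisor inequality follows by rounding from the $\Q$-divisor inequality $\divisor_Y((f\circ\pi)g) + E + \pi^*_{\rm num} D \geq 0$. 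Using $\divisor_Y g + E \geq 0$, it suffices to show that
\[
N := \divisor_Y(f\circ\pi) + \pi^*_{\rm num} D
\]
is an effective $\Q$-divisor on $\pi^{-1}(U)$.

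By the uniqueness of the numerical pull-back, $N = \pi^*_{\rm num}(\divisor_X f + D)$: the divisor $N$ is relatively numerically trivial (each summand is, the first as the pull-back of a principal Cartier divisor, the second by definition) and its pushforward recovers $\divisor_X f + D$. The hypothesis on $f$ gives $\divisor_X f + D \geq \{D\} \geq 0$ on $U$, so the lemma reduces to the following key assertion: \emph{the numerical pull-back to $Y$ of any effective numerically $\Q$-Cartier $\Q$-divisor $M$ on $U$ is effective on $\pi^{-1}(U)$}. I would establish this by decomposing $\pi^*_{\rm num} M = A + B$ into the part $A$ supported on primes of $Y$ dominating divisors of $X$ and the part $B$ supported on $\pi$-exceptional primes. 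Along the codimension-one regular locus of $X$, the Weil divisor $M$ is automatically Cartier and the numerical pull-back coincides with the usual Cartier pull-back, which is effective; since every non-exceptional prime of $Y$ meets the preimage of this locus, we obtain $A \geq 0$ (in fact $A$ is the strict transform of $M$, with its natural multiplicities).

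For the exceptional part $B$, relative numerical triviality of $\pi^*_{\rm num} M$ gives $B \cdot C = -A \cdot C$ on every $\pi$-exceptional complete curve $C$. Taking $C$ among the distinguished curves $C_1,\dots,C_s$ produced by Lemma~\ref{l:NL}, the movability in codimension~1 lets one deform $C_j$ off of the codimension-two intersection with $\Supp A$, so $A \cdot C_j \geq 0$ and hence $B \cdot C_j \leq 0$. Applying Lemma~\ref{l:NL}(b) to the exceptional $\Q$-divisor $-B$ (whose pushforward is zero and whose intersection with each $C_j$ is non-negative) yields $B \geq 0$, so $N = A + B \geq 0$. I expect the main obstacle to be precisely this step: one must verify carefully that the curves $C_j$, fixed independently of $M$, admit general deformations in their families that meet $A$ properly, and that Lemma~\ref{l:NL}(b) (stated for integer divisors) extends to $\Q$-coefficient exceptional divisors. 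In the positive-characteristic alteration setting of Section~\ref{sec:some-open-questions}, one additionally needs Lemma~\ref{l:NL} itself to survive the replacement of resolutions by de Jong's regular alterations, but its underlying Hodge Index input holds in every characteristic.
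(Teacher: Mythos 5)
Your reduction to the key assertion—that the numerical pull-back $\pi^*_{\mathrm{num}}(\operatorname{div}_X f + D)$ of the effective $\Q$-divisor $\operatorname{div}_X f + D$ is itself effective—is exactly the pivot of the paper's proof, and the surrounding bookkeeping with $f$, $g$, and the rounding is correct. Where you diverge is in how you establish that assertion. The paper simply notes that $\operatorname{div}_Y(f) + \pi^*_{\mathrm{num}}D$ is $\pi$-numerically trivial with effective pushforward, and invokes the standard Negativity Lemma \cite[Lemma~3.39]{KM98} (applied to the Stein factorization of $\pi$, as flagged when uniqueness of the numerical pull-back was discussed) to conclude it is effective. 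That argument is short, characteristic-free, and works for arbitrary regular alterations. You instead decompose the numerical pull-back into a non-exceptional part $A$ and an exceptional part $B$, argue $A\ge 0$ via the codimension-one regular locus, and then try to control $B$ using the movable curves $C_1,\dots,C_s$ from Lemma~\ref{l:NL}.

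This detour has a real gap. Lemma~\ref{l:NL} is stated only for a \emph{projective resolution} of a normal variety over an \emph{uncountable algebraically closed field of characteristic zero}, with the additional hypothesis that the exceptional locus has \emph{pure codimension one}. None of these hypotheses is available here: the present lemma lives in Section~\ref{sec:some-open-questions} and is specifically designed for $\pi$ a regular alteration (not birational, hence not a resolution) over a field of positive characteristic (and of arbitrary cardinality), and for an alteration there is no reason the exceptional locus should have pure codimension one or even be easy to describe. You acknowledge some of this at the end, but the burden is then to re-prove Lemma~\ref{l:NL} in that generality, which is a substantially harder (and unnecessary) task. The standard Negativity Lemma already gives precisely the statement you need, via the Hodge Index Theorem on surfaces inside the fibers of the Stein factorization of $\pi$, and it does so without the uncountability, characteristic-zero, or pure-codimension hypotheses. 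I'd recommend replacing your entire discussion of $A$, $B$, and the curves $C_j$ by that one appeal; the movable-curve refinement in Lemma~\ref{l:NL} is needed in the proof of Proposition~\ref{p:ineq} precisely because there one must reduce mod $p$ a fixed finite set of curves, but no such constraint is in play here.
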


\begin{proof}
We are free to assume that $X$ is
affine.  Suppose we have global sections of $\O_{X}(\lfloor D \rfloor)$ and
$\pi_{*}\O_{Y}(E)$, \textit{i.e.} we have $f
\in K(X) \subseteq K(Y)$ with $\mathrm{div}_{X}(f)+ D \geq 0$ and $g \in K(Y)$
with $\mathrm{div}_{Y}(g) + E \geq 0$.  Since $\mathrm{div}_{Y}(f) + \pi^{*}_{\rm num}D$
is $\pi$-trivial as $\mathrm{div}_{Y}(f) = \pi^{*}\mathrm{div}_{X}(f)$ and also
$\pi_{*}\left( \mathrm{div}_{Y}(f) + \pi^{*}_{\rm num}D \right) = \mathrm{div}_{X}(f) + D
\geq 0$, the Negativity Lemma once again implies that $\mathrm{div}_{Y}(f) +
\pi^{*}_{\rm num}D \geq 0$.  It follows that $\mathrm{div}_{Y}(fg) + E +
\pi^{*}_{\rm num}D \geq 0$ and so $f\cdot g$ is a global section of
$\pi_{*}\O_{Y}(\lfloor E + \pi^{*}_{\rm num}D\rfloor)$ as desired.
\end{proof}

\begin{prop}
\label{prop:easyinclusionnumgor}
  Suppose that $X$ is a normal variety over an algebraically closed
  field of positive
  characteristic.  Let $Z$ be an effective $\Q$-divisor on $X$ 
  so that $K_{X} + Z$ is numerically $\Q$-Cartier, and
  $\pi \colon Y \to X$ a regular alteration.  Then
\[
\tau(X, Z) \subseteq \Tr_{\pi} \left( \pi_{*} \O_{Y}( \lceil
  K_{Y} - \pi^{*}_{\rm num} (K_{X}+Z)\rceil\right)
\]
where $\Tr_{\pi} \colon \pi_{*} \omega_{Y} \to \omega_{X}$ is the
corresponding trace map.
\end{prop}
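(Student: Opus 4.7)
The plan is to follow the same strategy as the proof of the ``easy inclusion'' in Section~\ref{s:proof}, adapted to positive characteristic and to regular alterations. The three main ingredients are Schwede's sum formula for test ideals (Theorem~\ref{t:Sch}), the transformation rule \cite[Theorem~2.13]{Tak04} for test ideals under a regular alteration (in the extension to alterations given in \cite{BST}), and Lemma~\ref{lem:numercartieralterations} to bridge the ordinary and numerical pull-backs.

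First I would use Theorem~\ref{t:Sch} to reduce the claim to showing, for each effective $\Q$-divisor $\Gamma$ on $X$ with $K_X + \Gamma$ being $\Q$-Cartier, the inclusion
\[
\tau(X, \Gamma + Z) \subseteq \Tr_\pi\bigl(\pi_* \O_Y(\lceil K_Y - \pi^*_{\rm num}(K_X + Z)\rceil)\bigr).
\]
Since $K_X + \Gamma + Z = (K_X + \Gamma) + Z$ is $\Q$-Cartier, the transformation rule applied to the regular alteration $\pi$ gives
\[
\tau(X, \Gamma + Z) \subseteq \Tr_\pi\bigl(\pi_* \O_Y(\lceil K_Y - \pi^*(K_X + \Gamma + Z)\rceil)\bigr),
\]
reducing the problem to proving the containment of sheaves on $X$
\[
\pi_* \O_Y(\lceil K_Y - \pi^*(K_X + \Gamma + Z)\rceil) \subseteq \pi_* \O_Y(\lceil K_Y - \pi^*_{\rm num}(K_X + Z)\rceil).
\]

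When $\pi$ is a birational regular alteration, the ordinary and numerical pull-backs coincide on $\Q$-Cartier divisors, so by additivity of the numerical pull-back and the effectivity of $\pi^*\Gamma$ one obtains the pointwise inequality $\pi^*(K_X + \Gamma + Z) \ge \pi^*_{\rm num}(K_X + Z)$, and the containment follows at once by monotonicity of ceilings and of $\pi_*$. The main obstacle is the non-birational case, where an alteration of generic degree $d > 1$ produces the relation $\pi^*D = d\cdot\pi^*_{\rm num}D$ for $\Q$-Cartier $D$, so that $\pi^*(K_X + \Gamma + Z) - \pi^*_{\rm num}(K_X + Z) = (d-1)\pi^*_{\rm num}(K_X + Z) + \pi^*\Gamma$ can fail to be effective. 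Here I would invoke Lemma~\ref{lem:numercartieralterations} to transfer local sections directly: a rational function $g \in K(Y)$ representing a section of the left-hand sheaf can be paired with a suitable local function $f \in K(X)$, and the lemma, applied with an auxiliary numerically $\Q$-Cartier divisor $D$ built from $\Gamma$ and $K_X + Z$, translates the floor inequality into membership in the right-hand sheaf. The fact that Lemma~\ref{lem:numercartieralterations} is formulated for arbitrary regular alterations, with its proof relying on the Negativity Lemma applied to the Stein factorization of $\pi$, is precisely the mechanism needed to absorb the degree discrepancy into the push-forward structure.
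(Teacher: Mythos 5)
Your high-level route is genuinely different from the paper's. The paper proves Proposition~\ref{prop:easyinclusionnumgor} \emph{from scratch}: it lets $J$ denote the right-hand side, checks $J \subseteq \O_X$, and then verifies directly that $J$ satisfies the defining compatibility $\phi(F^e_*J) \subseteq J$ for every $\phi \in \Hom_{\O_X}(F^e_*\O_X(\lceil(p^e-1)Z\rceil),\O_X)$, using Grothendieck duality for Frobenius together with Lemma~\ref{lem:numercartieralterations} applied to $D = (1-p^e)(K_X+Z)$ and functoriality of the trace map; minimality of $\tau$ then gives $\tau(X,Z)\subseteq J$. Your proposal instead tries to \emph{import} the $\Q$-Cartier theory wholesale via Theorem~\ref{t:Sch} and the transformation rule for alterations in \cite{BST}, and then finish with a divisor comparison on $Y$. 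That skeleton is a legitimate alternative, and its chief advantage is that it avoids re-running the duality/trace computation.

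However, there are two real problems with the way you carry it out. First, the ``main obstacle'' you identify is illusory. Under the conventions of this paper, the pushforward $\pi_*$ for a generically finite $\pi$ is normalized so that $\pi_*\pi^* = \mathrm{id}$ on Cartier divisors; this is forced both by the definition (``$D = \pi_*D'$'') and by the statement that the numerical pull-back can be computed on any regular alteration because ``the pull-back of a Cartier divisor to a dominating regular alteration remains relatively numerically trivial.'' Consequently $\pi^*_{\rm num}$ \emph{extends} the ordinary pull-back: $\pi^*_{\rm num}D = \pi^*D$ for $\Q$-Cartier $D$, regardless of the generic degree of $\pi$. Your claimed relation $\pi^*D = d\cdot\pi^*_{\rm num}D$ is wrong, and so the divisor inequality $\pi^*(K_X+\Gamma+Z) \ge \pi^*_{\rm num}(K_X+Z)$ holds for \emph{every} regular alteration, not just birational ones: it reduces to $\pi^*_{\rm num}\Gamma \ge 0$, which follows from the Negativity Lemma applied to the Stein factorization exactly as in the proof of Lemma~\ref{lem:numercartieralterations} (note that $\Gamma = (K_X+\Gamma+Z) - (K_X+Z)$ is numerically $\Q$-Cartier). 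In other words, the ``birational'' argument you sketch already works in general, and the hand-wavy patch you propose (``invoke Lemma~\ref{lem:numercartieralterations} to transfer local sections directly'') is both unnecessary and not an actual argument --- you never say which $D$, which $E$, or which $f$ you would feed into the lemma, nor how the floor inequality would emerge.

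Second, Theorem~\ref{t:Sch} as quoted in the paper requires $Z$ itself to be an effective $\Q$-\emph{Cartier} $\Q$-divisor, whereas the proposition only assumes $Z$ effective with $K_X + Z$ numerically $\Q$-Cartier. If $Z$ fails to be $\Q$-Cartier, you cannot invoke Theorem~\ref{t:Sch} as stated, nor is $K_X+\Gamma+Z$ $\Q$-Cartier when $K_X+\Gamma$ is (so the transformation rule does not directly apply to the pair $(X, \Gamma+Z)$ either). This may be repairable by appealing to a more general form of Schwede's result (summing over $\Gamma$ with $K_X+Z+\Gamma$ $\Q$-Cartier), but that would need to be argued; the paper's direct proof simply sidesteps the issue by working with $(1-p^e)(K_X+Z)$ on the nose. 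So your approach, once the spurious degree worry is removed and the cleanup is done, does give a correct proof in the case where $Z$ is $\Q$-Cartier, but the paper's argument is both self-contained and covers the full generality of the statement.
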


\newcommand{\sHom}{\mathcal{H}om}
\begin{proof}
  The statement is local, so we are free to assume $X$ is affine.  Let
  $$J = \Tr_{\pi} \left( \pi_{*} \O_{Y}( \lceil
  K_{Y} - \pi^{*}_{\rm num} (K_{X}+Z)\rceil\right) = \Tr_{\pi} \left( \pi_{*} \O_{Y}( 
  K_{Y} - \lfloor \pi^{*}_{\rm num} (K_{X}+Z)\rfloor\right).$$  It is clear that $J$
is a fractional ideal sheaf: furthermore, using that $X$ is normal, we
may verify $J \subseteq
\O_{X}$ by checking on the smooth locus of $X$ where $K_{X} +
Z$ is $\Q$-Cartier.  See \cite[Proposition 2.18]{BST} for a proof.

The conclusion follows if we can show $\phi(F^{e}_{*}J) \subseteq J$
for all $e > 0$ and all $$\phi \in \Hom_{\O_{X}}(F^{e}_{*}\O_{X}(\lceil (p^{e} - 1)
Z \rceil), \O_{X}).$$
 By use of duality for a finite morphism (see \cite{BST}), using that
 both sheaves are reflexive, we have that 
\[
\Hom_{\O_{X}}((F^{e}_{*}\O_{X}(\lceil (p^{e} - 1)
Z \rceil), \O_{X}) = F^{e}_{*}\O_{X}(\lfloor (1-p^{e})(K_{X} +
Z) \rfloor) \cdot \langle \Tr_{F^{e}} \rangle
\]
where the action on the right is premultiplication
of the
trace of Frobenius $\Tr_{F^{e}} \colon F^{e}_{*}\omega_{X} \to \omega_{X}$.  Thus, we must show
\begin{equation}
\label{eq:trace}
\Tr_{F^{e}}\left( F^{e}_{*} (\O_{X}(\lfloor (1-p^{e})(K_{X} +
Z) \rfloor) \cdot J) \right) \subseteq J.
\end{equation}
To that end, using Lemma~\ref{lem:numercartieralterations}, we have
\begin{align*}
&\O_{X}(\lfloor (1-p^{e})(K_{X} +
Z) \rfloor) \cdot J \\ \subseteq &\Tr_{\pi}\left(\pi_{*}\O_{Y}(K_{Y}
  - \lfloor \pi^{*}_{\rm num}(K_{X}+Z) \rfloor + \lfloor
  (1-p^{e})\pi^{*}_{\rm num}(K_{X} + Z) \rfloor \right) \\ \subseteq
&\Tr_{\pi}\left(\pi_{*}\O_{Y}(K_{Y}
  - p^{e}\lfloor \pi^{*}_{\rm num}(K_{X}+Z) \rfloor) \right)
\end{align*}
after verifying that for any $\Q$-divisor $D$, $-\lfloor D \rfloor +
\lfloor (1-p^{e})D \rfloor \leq -p^{e} \lfloor D \rfloor$.  Equation
\eqref{eq:trace} now follows using \cite[Proposition 2.18]{BST} and
the functoriality of the trace map:
\begin{align*}
  &\Tr_{F^{e}}\left( F^{e}_{*} \O_{X}(\lfloor (1-p^{e})(K_{X} +
Z) \rfloor) \cdot J \right) \\
\subseteq &\Tr_{F^{e}\circ \pi} \left( F^{e}_{*}\pi_{*} \O_{Y}(K_{Y}
  - p^{e}\lfloor \pi^{*}_{\rm num}(K_{X}+Z) \rfloor)\right) \\
\subseteq
&\Tr_{\pi}\left( \pi_{*}\left(\Tr_{F^{e}}(F^{e}_{*}\O_{Y}(K_{Y}
  - p^{e}\lfloor \pi^{*}_{\rm num}(K_{X}+Z) \rfloor))
\right)\right) \\
\subseteq &\Tr_{\pi}\left( \pi_{*}\O_{Y}( 
  K_{Y} - \lfloor \pi^{*}_{\rm num} (K_{X}+Z)\rfloor\right) = J.
\end{align*}
\end{proof}

%\begin{rmk}
  As above in characteristic zero, one may define a normal variety in
  positive characteristic to be \emph{numerically $\Q$-Gorenstein} if
  the canonical class is numerically $\Q$-Cartier.  However, given a
  model of a numerically $\Q$-Gorenstein
  variety $X$ in characteristic zero over a finitely generated
  $\Z$-algebra $A$, it is unclear whether $X_{\mu}$ remains (geometrically)
  numerically $\Q$-Gorenstein for an open subset of closed points $\mu
  \in \Spec A$.  Said another way, given the behavior
  of nefness in families (\textit{cf.} \cite{L,Les}), it is unclear if
  numerical pull-back can be preserved after reduction to positive
  characteristic.  In a sense, this is the difficulty which must be
  avoided in proving Theorem~\ref{t:main}, as otherwise one could use
  Proposition~\ref{prop:easyinclusionnumgor} in order to verify \eqref{eq:normallyeasyinclusion}.
%\end{rmk}

Given the main results of \cite{BST, BSTZ}, it would seem natural to
ask the following questions.

\begin{question}
  Suppose that $X$ is a numerically $\Q$-Gorenstein variety over an algebraically closed
  field of positive
  characteristic.
  \begin{enumerate}
  \item Is it true that $\tau(X) =
    \Tr_{\pi}\left(\p_*\O_{Y}(\lceil K_{Y}-\pi^{*}_{\rm num}K_{X}\rceil)
    \right)$ for all sufficiently large regular alterations $\pi \colon Y
    \to X$?  More generally, can one find an alteration that achieves
    equality in Proposition~\ref{prop:easyinclusionnumgor}?
  \item Suppose, in addition, that $X$ is strongly $F$-regular, and let $Z$
    be a $\Q$-Cartier divisor on $X$.  Are the $F$-jumping numbers of
    the test ideals $\tau(X, \lambda Z)$ for $\lambda \in \Q_{\geq 0}$
    a discrete set of rational numbers?  In particular, is the
    $F$-pure threshold of $(X,Z)$ rational?
  \end{enumerate}
\end{question}

%section{References}
\begin{bibdiv}
\begin{biblist}

\bib{BST}{article}{
   author={Blickle, Manuel},
   author={Schwede, Karl},
   author={Tucker, Kevin},
   title={F-singularities via alterations},
   note={To appear in Amer. J. Math., {\tt arXiv:1107.3807}},
}

\bib{BSTZ}{article}{
   author={Blickle, Manuel},
   author={Schwede, Karl},
   author={Takagi, Shunsuke},
   author={Zhang, Wenliang},
   title={Discreteness and rationality of $F$-jumping numbers on singular
   varieties},
   journal={Math. Ann.},
   volume={347},
   date={2010},
   number={4},
   pages={917--949},
   % issn={0025-5831},
   % review={\MR{2658149 (2011k:13008)}},
   % doi={10.1007/s00208-009-0461-2},
}

\bib{BdFF12}{article}{
   author={Boucksom, Sebastien},
   author={de Fernex, Tommaso},
   author={Favre, Charles},
   title={The volume of an isolated singularity},
   journal={Duke Math. J.},
   volume={161},
   date={2012},
   number={8},
   pages={1455--1520},
%   issn={0012-7094},
%   review={\MR{2931273}},
%   doi={10.1215/00127094-1593317},
}

\bib{BdFFU}{article}{
   author={Boucksom, Sebastien},
   author={de Fernex, Tommaso},
   author={Favre, Charles},
   author={Urbinati, Stefano},
   title={Valuation spaces and multiplier ideals on singular varieties},
   note={To appear in the London Math. Soc. Lecture Note Series, volume in honor of Rob Lazarsfeld's 60th birthday, {\tt arXiv:1307.0227}},
}

\bib{dFH09}{article}{
   author={de Fernex, Tommaso},
   author={Hacon, Christopher D.},
   title={Singularities on normal varieties},
   journal={Compos. Math.},
   volume={145},
   date={2009},
   number={2},
   pages={393--414},
%   issn={0010-437X},
%   review={\MR{2501423 (2010c:14013)}},
%   doi={10.1112/S0010437X09003996},
}

\bib{dJ}{article}{
   author={de Jong, A. J.},
   title={Smoothness, semi-stability and alterations},
   journal={Inst. Hautes \'Etudes Sci. Publ. Math.},
   number={83},
   date={1996},
   pages={51--93},
   % issn={0073-8301},
   % review={\MR{1423020 (98e:14011)}},
}

\bib{Har01}{article}{
   author={Hara, Nobuo},
   title={Geometric interpretation of tight closure and test ideals},
   journal={Trans. Amer. Math. Soc.},
   volume={353},
   date={2001},
   number={5},
   pages={1885--1906 (electronic)},
%   issn={0002-9947},
%   review={\MR{1813597 (2001m:13009)}},
%   doi={10.1090/S0002-9947-01-02695-2},
}

\bib{KM98}{book}{
   author={Koll{\'a}r, J{\'a}nos},
   author={Mori, Shigefumi},
   title={Birational geometry of algebraic varieties},
   series={Cambridge Tracts in Mathematics},
   volume={134},
   note={With the collaboration of C. H. Clemens and A. Corti;
   Translated from the 1998 Japanese original},
   publisher={Cambridge University Press},
   place={Cambridge},
   date={1998},
   pages={viii+254},
%   isbn={0-521-63277-3},
%   review={\MR{1658959 (2000b:14018)}},
%   doi={10.1017/CBO9780511662560},
}

\bib{HH99}{article}{
    AUTHOR = {Hochster, Melvin}, 
    AUTHOR = {Huneke, Craig}, 
    TITLE = {Tight closure in equal characteristic zero}, 
    JOURNAL = {Preprint}, 
    Year = {1999}, 
}    

\bib{L}{article}{
   author={Langer, Adrian},
   title={On positivity and semistability of vector bundles in finite and mixed characteristics},
   note={To appear in J. Ramanujan Math. Soc., {\tt arXiv:1301.4450}},
}

\bib{Les}{article}{
   author={Lesieutre, John},
   title={The diminished base locus is not always closed},
   note={Submitted, {\tt arXiv:1212.37380}},
}
    
\bib{MS11}{article}{
    AUTHOR = {Musta{\c{t}}{\u{a}}, Mircea}, 
    AUTHOR = {Srinivas, Vasudevan},
    TITLE = {Ordinary varieties and the comparison between multiplier ideals and test ideals},
    JOURNAL = {Nagoya Math. J.}, 
    VOLUME = {204},
    YEAR = {2011},
    PAGES = {125--157},
%    ISSN = {0027-7630},
%    MRCLASS = {14F18 (13A35)},
%    MRNUMBER = {2863367},
}

\bib{Sch11}{article}{
   author={Schwede, Karl},
   title={Test ideals in non-$\Bbb{Q}$-Gorenstein rings},
   journal={Trans. Amer. Math. Soc.},
   volume={363},
   date={2011},
   number={11},
   pages={5925--5941},
%   issn={0002-9947},
%   review={\MR{2817415 (2012c:13011)}},
%   doi={10.1090/S0002-9947-2011-05297-9},
}

\bib{Smi00}{article}{
   author={Smith, Karen E.},
   title={The multiplier ideal is a universal test ideal},
   note={Special issue in honor of Robin Hartshorne},
   journal={Comm. Algebra},
   volume={28},
   date={2000},
   number={12},
   pages={5915--5929},
%   issn={0092-7872},
%   review={\MR{1808611 (2002d:13008)}},
%   doi={10.1080/00927870008827196},
}

\bib{Tak04}{article}{
   author={Takagi, Shunsuke},
   title={An interpretation of multiplier ideals via tight closure},
   journal={J. Algebraic Geom.},
   volume={13},
   date={2004},
   number={2},
   pages={393--415},
%   issn={1056-3911},
%   review={\MR{2047704 (2005c:13002)}},
%   doi={10.1090/S1056-3911-03-00366-7},
}

\end{biblist}
\end{bibdiv}

\end{document}